\theoremstyle{definition}
\numberwithin{equation}{section}
\newcommand{\ncom}{\newcommand}
\ncom{\beq}{\begin{equation}}
\ncom{\eeq}{\end{equation}}
\ncom{\bea}{\begin{eqnarray*}}
\ncom{\eea}{\end{eqnarray*}}
\ncom{\beqa}{\begin{eqnarray}}
\ncom{\eeqa}{\end{eqnarray}}
\ncom{\nno}{\nonumber}
\ncom{\non}{\nonumber}
\ncom{\ds}{\displaystyle}
\ncom{\half}{\frac{1}{2}}
\ncom{\mbx}{\makebox{.25cm}}
\ncom{\hs}{\mbox{\hspace{.25cm}}}
\ncom{\rar}{\rightarrow}
\ncom{\Rar}{\Rightarrow}
\ncom{\noin}{\noindent}
\ncom{\bc}{\begin{center}}
\ncom{\ec}{\end{center}}
\ncom{\sz}{\scriptsize}
\ncom{\rf}{\ref}
\ncom{\s}{\sqrt{2}}
\ncom{\sgm}{\sigma}
\ncom{\Sgm}{\Sigma}
\ncom{\psgm}{\sigma^{\prime}}
\ncom{\dt}{\delta}
\ncom{\Dt}{\Delta}
\ncom{\lmd}{\lambda}
\ncom{\Lmd}{\Lambda}
\ncom{\Th}{\Theta}
\ncom{\e}{\eta}
\ncom{\eps}{\epsilon}
\ncom{\pcc}{\stackrel{P}{>}}
\ncom{\lp}{\stackrel{L_{p}}{>}}
\ncom{\dist}{{\rm\,dist}}
\ncom{\sspan}{{\rm\,span}}
\ncom{\re}{{\rm Re\,}}
\ncom{\im}{{\rm Im\,}}
\ncom{\sgn}{{\rm sgn\,}}
\ncom{\ba}{\begin{array}}
\ncom{\ea}{\end{array}}
\ncom{\hone}{\mbox{\hspace{1em}}}
\ncom{\htwo}{\mbox{\hspace{2em}}}
\ncom{\hthree}{\mbox{\hspace{3em}}}
\ncom{\hfour}{\mbox{\hspace{4em}}}
\ncom{\vone}{\vskip 2ex}
\ncom{\vtwo}{\vskip 4ex}
\ncom{\vonee}{\vskip 1.5ex}
\ncom{\vthree}{\vskip 6ex}
\ncom{\vfour}{\vspace*{8ex}}
\ncom{\norm}{\|\;\;\|}
\ncom{\integ}[4]{\int_{#1}^{#2}\,{#3}\,d{#4}}
\ncom{\vspan}[1]{{{\rm\,span}\{ #1 \}}}
\ncom{\dm}[1]{ {\displaystyle{#1} } }
\ncom{\ri}[1]{{#1} \index{#1}}
\newtheorem{theorem}{\bf Theorem}[section]
\newtheorem{remark}{\bf Remark}[section]
\newtheorem{lemma}{Lemma}[section]
\newtheorem{corollary}{Corollary}[section]
\newtheoremstyle
    {remarkstyle}
    {}
    {11pt}
    {}
    {}
    {\bfseries}
    {:}
    {     }
    {\thmname{#1} \thmnumber{#2} }
\theoremstyle{remarkstyle}
\def\eps{\varepsilon}
\begin{document}
\title{The Generalized $K$-Wright Function and Marichev-Saigo-Maeda Fractional Operators}
\author[Kuldeep Kumar Kataria]{K. K. Kataria}
\address{Kuldeep Kumar Kataria, Department of Mathematics,
 Indian Institute of Technology Bombay, Powai, Mumbai 400076, INDIA.}
 \email{kulkat@math.iitb.ac.in}
\author{P. Vellaisamy}
\address{P. Vellaisamy, Department of Mathematics,
 Indian Institute of Technology Bombay, Powai, Mumbai 400076, INDIA.}
 \email{pv@math.iitb.ac.in}
\thanks{The research of K. K. Kataria was supported by UGC, Govt. of India.}
\subjclass[2010]{Primary : 26A33, 33C20; Secondary : 33C65, 33C05}
\keywords{Generalized $K$-Wright function; Marichev-Saigo-Maeda fractional operators; Appell function.}
\begin{abstract}
In this paper, the generalized fractional operators involving Appell's function $F_3$ in the kernel due to Marichev-Saigo-Maeda are applied to the generalized $K$-Wright function. These fractional operators when applied to power multipliers of the generalized $K$-Wright function ${}_{p}\Psi^k_q$ yields a higher ordered generalized $K$-Wright function, namely, ${}_{p+3}\Psi^k_{q+3}$. The Caputo-type modification of Marichev-Saigo-Maeda fractional differentiation is introduced and the corresponding assertions for Saigo and Erd\'elyi-Kober fractional operators are also presented. The results derived in this paper generalize several recent results in the theory of special functions. 
\end{abstract}

\maketitle
\section{Introduction}
Throughout this paper $\mathbb{R}$ and $\mathbb{C}$ denote the sets of real and complex numbers respectively. Also $\mathbb{R}^+=(0,\infty)$, $\mathbb{N}_0=\{0,1,\ldots\}$ and $\mathbb{Z}^-=\{-1,-2,\ldots\}$.
Let $\alpha_i,\beta_j\in\mathbb{R}\setminus\{0\}$ and $a_i,b_
j\in\mathbb{C}$, $i=1,2,\ldots,p;\ j=1,2,\ldots,q$. Then the Generalized Wright function is defined for $z\in\mathbb{C}$ by the series
\begin{equation}\label{1.1}
{}_{p}\Psi_q(z)={}_{p}\Psi_q\Bigg[\left.
\begin{matrix}
    (a_i,\alpha_i)_{1,p}\\ 
    (b_j,\beta_j)_{1,q}
  \end{matrix}
\right|z\Bigg]
:=\sum_{n=0}^{\infty}\frac{\prod_{i=1}^{p}\Gamma(a_i+n\alpha_i)}{\prod_{j=1}^{q}\Gamma(b_j+n\beta_j)}\frac{z^n}{n!},
\end{equation}
where $\Gamma(z)$ is the Euler gamma function \cite{Erdelyi1953}. This function was introduced by Wright \cite{Wright1935286} and conditions for its existence together with its representation in terms of the Mellin-Barnes integral and of the \textit{H}-function were established by Kilbas \textit{et al.} (for details see \cite{Kilbas2002437}). Recently, Gehlot and Prajapati \cite{Gehlot201381}, introduced the following generalized $K$-Wright function defined in terms of $k$-gamma function by the series
\begin{equation}\label{1.2}
{}_{p}\Psi_q^k(z)={}_{p}\Psi_q^k\Bigg[\left.
\begin{matrix}
    (a_i,\alpha_i)_{1,p}\\ 
    (b_j,\beta_j)_{1,q}
  \end{matrix}
\right|z\Bigg]
:=\sum_{n=0}^{\infty}\frac{\prod_{i=1}^{p}\Gamma_k(a_i+n\alpha_i)}{\prod_{j=1}^{q}\Gamma_k(b_j+n\beta_j)}\frac{z^n}{n!},
\end{equation}
where $k\in\mathbb{R}^+$ and $(a_i+n\alpha_i),(b_j+n\beta_j)\in\mathbb{C}\setminus k\mathbb{Z}^-\ \forall\ n\in\mathbb{N}_0$. Here, $\Gamma_k(z)$ is the generalized $k$-gamma function \cite{Diaz2007179} defined for $z\in\mathbb{C}\setminus k\mathbb{Z}^-$ and $k\in\mathbb{R}^+$ by
\begin{equation*}\label{1.3}
\Gamma_k(z)=\underset{n\rightarrow\infty}{\lim}\frac{n!k^n(nk)^{\frac{z}{k}-1}}{(z)_{n,k}},
\end{equation*}
and $(z)_{n,k}$ is the $k$-Pochhammer symbol \cite{Diaz2007179} defined for complex $z\in\mathbb{C}$ and $k\in\mathbb{R}$ by
\begin{equation}\label{1.4}
(z)_{n,k}=\left\{
	\begin{array}{ll}
	    1 & \mbox{if } n=0,\\
		z(z+k)(z+2k)\ldots(z+(n-1)k)  & \mbox{if } n\in\mathbb{N}.
	\end{array}
\right.
\end{equation}
The integral representation of $\Gamma_k(z)$ is defined for $z\in\mathbb{C},\ \operatorname{Re}(z)>0$ and $k\in\mathbb{R}^+$  by
\begin{equation*}\label{1.5}
\Gamma_k(z)=\int_0^{\infty}t^{z-1}e^{-\frac{t^k}{k}}\,dt,
\end{equation*}
due to which the following identity holds (see \cite{Diaz2007179}):
\begin{equation}\label{1.6}
\Gamma_k(z)=k^{\frac{z}{k}-1}\Gamma\left(\frac{z}{k}\right).
\end{equation}
For $k=1$, the generalized $K$-Wright function reduces to the generalized Wright function. Wright function \cite{Erdelyi1954}, Bessel-Maitland function (see \cite{Kiryakova1994}, \cite{Marichev1983}) and generalized Mittag-Leffler function \cite{Kilbas2002437} are some other particular cases. The asymptotic behavior of the generalized Wright function ${}_{p}\Psi_q^k(z)$ for large values of argument of $z$ was studied by Fox \cite{Fox389} and Wright \cite{Wright1935286},\cite{Wright1940423},\cite{Wright1940389} under the condition
\begin{equation*}\label{1.7}
\sum_{j=1}^q\beta_j-\sum_{i=1}^p\alpha_i>-1.
\end{equation*}
Saigo \cite{Saigo1978135} introduced the fractional integral and differential operators involving Gauss hypergeometric function as the kernel, which are interesting generalizations of classical Riemann-Liouville and Erd\'elyi-Kober fractional operators (for details see \cite{Kilbas2006}). For $\alpha,\beta,\gamma\in\mathbb{C}$ and $x\in\mathbb{R}^+$ with $\operatorname{Re}(\alpha)>0$, the left- and right-hand sided generalized fractional integral operators associated with Gauss hypergeometric function are defined by
\begin{equation}\label{1.8}
\left(I_{0+}^{\alpha,\beta,\gamma}f\right)(x)=\frac{x^{-\alpha-\beta}}{\Gamma(\alpha)}\int_0^x(x-t)^{\alpha-1}{}_{2}F_1\left(\alpha+\beta,-\gamma;\alpha;1-\frac{t}{x}\right)f(t)\,dt
\end{equation}
and
\begin{equation}\label{1.9}
\left(I_{-}^{\alpha,\beta,\gamma}f\right)(x)=\frac{1}{\Gamma(\alpha)}\int_x^{\infty}(t-x)^{\alpha-1}t^{-\alpha-\beta}{}_{2}F_1\left(\alpha+\beta,-\gamma;\alpha;1-\frac{x}{t}\right)f(t)\,dt,
\end{equation}
respectively. Here ${}_{2}F_1(a,b;c;z)$ is the Gauss hypergeometric function \cite{Kilbas2006} defined for $z\in\mathbb{C},\ |z|<1$ and $\alpha,\beta\in\mathbb{C},\ \gamma\in\mathbb{C}\setminus\mathbb{Z}_0^-$ in the unit disk as the sum of the hypergeometric series
\begin{equation*}\label{1.12}
{}_{2}F_1(\alpha,\beta;\gamma;z)=\sum_{n=0}^{\infty}\frac{(\alpha)_{n}(\beta)_{n}}{(\gamma)_{n}}\frac{z^n}{n!}.
\end{equation*}
where $(z)_{n}$ is the standard Pochhammer symbol which in view of (\ref{1.4}) is equal to $(z)_{n,1}$. The corresponding fractional differential operators have their respective forms as
\begin{equation}\label{1.10}
\left(D_{0+}^{\alpha,\beta,\gamma}f\right)(x)=\left(\frac{d}{dx}\right)^{[\operatorname{Re}(\alpha)]+1}\left(I_{0+}^{-\alpha+[\operatorname{Re}(\alpha)]+1,-\beta-[\operatorname{Re}(\alpha)]-1,\alpha+\gamma-[\operatorname{Re}(\alpha)]-1}f\right)(x)
\end{equation}
and
\begin{equation}\label{1.11}
\left(D_{-}^{\alpha,\beta,\gamma}f\right)(x)=\left(-\frac{d}{dx}\right)^{[\operatorname{Re}(\alpha)]+1}\left(I_{-}^{-\alpha+[\operatorname{Re}(\alpha)]+1,-\beta-[\operatorname{Re}(\alpha)]-1,\alpha+\gamma}f\right)(x),
\end{equation}
where $[\operatorname{Re}(\alpha)]$ denotes the integer part of $\operatorname{Re}(\alpha)$. For $\beta=-\alpha$ and $\beta=0$ in (\ref{1.8})-(\ref{1.11}), we get the corresponding Riemann-Liouville and Erd\'elyi-Kober fractional operators respectively.

\noindent The special function $F_3$, called third Appell function \cite{Erdelyi1953}, \cite{Prudnikov1990} (also known as Horn function), is defined by
\begin{equation*}\label{1.17}
F_3(\alpha,\alpha',\beta,\beta';\gamma;x;y)=\sum_{m,n=0}^{\infty}\frac{(\alpha)_{m}(\alpha')_{n}(\beta)_{m}(\beta')_{n}}{(\gamma)_{m+n}}\frac{x^my^n}{m!n!},
\end{equation*}
such that $\max\{|x|,|y|\}<1$. It is related to Gauss hypergeometric function as
\begin{equation*}\label{1.18}
F_3(\alpha,\gamma-\alpha,\beta,\gamma-\beta;\gamma;x;y)={}_{2}F_1\left(\alpha,\beta,\gamma;x+y-xy\right).
\end{equation*}
A generalization of Saigo operators was introduced by Marichev \cite{Marichev1974128} as Mellin type convolution operators with Appell function as the kernel, which were later extended and studied by Saigo and Maeda \cite{Saigo1998}. For $\alpha,\alpha',\beta,\beta',\gamma\in\mathbb{C}$ and $x\in\mathbb{R}^+$ with $\operatorname{Re}(\gamma)>0$, the left- and right-hand sided Marichev-Saigo-Maeda fractional integral operators associated with third Appell function are defined by
\begin{equation}\label{1.13}
\left(I_{0+}^{\alpha,\alpha',\beta,\beta',\gamma}f\right)(x)=\frac{x^{-\alpha}}{\Gamma(\gamma)}\int_0^x(x-t)^{\gamma-1}t^{-\alpha'}F_3\left(\alpha,\alpha',\beta,\beta';\gamma;1-\frac{t}{x},1-\frac{x}{t}\right)f(t)\,dt
\end{equation}
and
\begin{equation}\label{1.14}
\left(I_{-}^{\alpha,\alpha',\beta,\beta',\gamma}f\right)(x)=\frac{x^{-\alpha'}}{\Gamma(\gamma)}\int_x^{\infty}(t-x)^{\gamma-1}t^{-\alpha}F_3\left(\alpha,\alpha',\beta,\beta';\gamma;1-\frac{x}{t},1-\frac{t}{x}\right)f(t)\,dt,
\end{equation}
respectively. The corresponding fractional differential operators have their respective forms as
\begin{equation}\label{1.15}
\left(D_{0+}^{\alpha,\alpha',\beta,\beta',\gamma}f\right)(x)=\left(\frac{d}{dx}\right)^{[\operatorname{Re}(\gamma)]+1}\left(I_{0+}^{-\alpha',-\alpha,-\beta'+[\operatorname{Re}(\gamma)]+1,-\beta,-\gamma+[\operatorname{Re}(\gamma)]+1}f\right)(x)
\end{equation}
and
\begin{equation}\label{1.16}
\left(D_{-}^{\alpha,\alpha',\beta,\beta',\gamma}f\right)(x)=\left(-\frac{d}{dx}\right)^{[\operatorname{Re}(\gamma)]+1}\left(I_{-}^{-\alpha',-\alpha,-\beta',-\beta+[\operatorname{Re}(\gamma)]+1,-\gamma+[\operatorname{Re}(\gamma)]+1}f\right)(x).
\end{equation}
These operators generalize the well-known operators such as the Riemann-Liouville , Weyl, Erd´elyi-Kober and Saigo operators. Marichev-Saigo-Maeda (MSM) fractional operators (\ref{1.13})-(\ref{1.16}) are connected to Saigo operators (\ref{1.8})-(\ref{1.11}) as follows:
\begin{equation}\label{1.19}
\left(I_{0+}^{\alpha,0,\beta,\beta',\gamma}f\right)(x)=\left(I_{0+}^{\gamma,\alpha-\gamma,-\beta}f\right)(x),\ \ \ \ 
\left(I_{-}^{\alpha,0,\beta,\beta',\gamma}f\right)(x)=\left(I_{-}^{\gamma,\alpha-\gamma,-\beta}f\right)(x)
\end{equation}
and
\begin{equation}\label{1.20}
\left(D_{0+}^{0,\alpha',\beta,\beta',\gamma}f\right)(x)=\left(D_{0+}^{\gamma,\alpha'-\gamma,\beta'-\gamma}f\right)(x),\ \ \ \ 
\left(D_{-}^{0,\alpha',\beta,\beta',\gamma}f\right)(x)=\left(D_{-}^{\gamma,\alpha'-\gamma,\beta'-\gamma}f\right)(x).
\end{equation}
The paper is organized as follow. In section 2 some preliminary results are stated. In sections 3 and 4, we consider the MSM fractional integral and differential transform of the generalized $K$-Wright function. In section 5, Caputo type modification of the MSM fractional differential operators is introduced. The corresponding results for the Saigo and Erd\'elyi-Kober fractional operators are mentioned as corollaries. 
\section{Preliminaries}
\setcounter{equation}{0}
In this section, we present the conditions for the existence of the generalized $K$-Wright function ${}_{p}\Psi^k_q(z)$. We use the following notations:
\begin{equation}\label{2.1}
\Delta=\sum_{j=1}^q\left(\frac{\beta_j}{k}\right)-\sum_{i=1}^p\left(\frac{\alpha_i}{k}\right),
\end{equation}
\begin{equation*}
\delta=\prod_{i=1}^p\left|\frac{\alpha_i}{k}\right|^{-\frac{\alpha_i}{k}}\prod_{j=1}^q\left|\frac{\beta_j}{k}\right|^{\frac{\beta_j}{k}}\ \ \mathrm{and}\ \ \ 
\mu=\sum_{j=1}^q\left(\frac{b_j}{k}\right)-\sum_{i=1}^p\left(\frac{a_i}{k}\right)+\frac{p-q}{2}.\nonumber
\end{equation*}
The following result is proved in \cite{Gehlot201381}.
\begin{theorem}\label{t2.1}
Let $k\in\mathbb{R}^+$, $a_i,b_j\in\mathbb{C}$ and $\alpha_i,\beta_j\in\mathbb{R}$ $(\alpha_i,\beta_j\neq0\ \forall\ i=1,2,\ldots p$ and $j=1,2,\ldots q)$ such that $(a_i+n\alpha_i),(b_j+n\beta_j)\in\mathbb{C}\setminus k\mathbb{Z}^-\ \forall\ n\in\mathbb{N}_0$.
\begin{itemize}
\item[(a)] If $\Delta>-1$, then the series (\ref{1.2}) is absolutely convergent for all $z\in\mathbb{C}$ and ${}_{p}\Psi^k_q(z)$ is an entire function of $z$.
\item[(b)] If $\Delta=-1$, then the series (\ref{1.2}) is absolutely convergent for $|z|<\delta$, and for $|z|=\delta$ and $\operatorname{Re}(\mu)>\frac{1}{2}$.
\end{itemize}
\end{theorem}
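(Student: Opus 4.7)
The plan is to reduce the convergence question to the classical generalized Wright function via identity (\ref{1.6}), appeal to the Kilbas--Saigo--Trujillo convergence theorem from \cite{Kilbas2002437}, and handle the boundary case by a direct Stirling asymptotic estimate.

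First, I would substitute $\Gamma_k(w)=k^{w/k-1}\Gamma(w/k)$ into every gamma factor appearing in the defining series (\ref{1.2}). All powers of $k$ collapse telescopically: the exponent of $k$ contributed by the $n$-th term is $-n\Delta + C$, where $C$ depends only on $p$, $q$, $k$, and the parameters $a_i$, $b_j$, not on $n$. Collecting the $k^{-n\Delta}$ factor with $z^n$ yields the identity
\[
{}_p\Psi^k_q(z) \;=\; k^{\,C}\,{}_p\Psi_q\bigl(k^{-\Delta}z\bigr),
\]
where the classical generalized Wright function (\ref{1.1}) on the right-hand side has the rescaled parameters $(a_i/k,\alpha_i/k)_{1,p}$ and $(b_j/k,\beta_j/k)_{1,q}$. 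A short check shows that, under this rescaling, the classical convergence index coincides with $\Delta$, while the classical boundary radius and Stirling exponent match $\delta$ and $\mu$ defined in Section~2.

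Second, I would invoke the classical convergence theorem for the Wright function (Kilbas et al.~\cite{Kilbas2002437}). For part (a), $\Delta>-1$ gives entirety of the classical series, hence of ${}_p\Psi^k_q(z)$. For the open-disk part of (b), $\Delta=-1$ yields absolute convergence of the classical series on its disk of convergence, which translates after the rescaling to $|z|<\delta$.

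The delicate step is the boundary case $|z|=\delta$ in (b), for which I would argue directly. Using the two-term Stirling expansion $\log\Gamma(w)=(w-\tfrac12)\log w - w + \tfrac12\log 2\pi + O(1/|w|)$ for every factor in the $n$-th term $T_n$ of (\ref{1.2}), together with Stirling for $\log n!$, I expand $\log|T_n|$ as an asymptotic series in the scales $n\log n$, $n$, and $\log n$. The coefficient of $n\log n$ is $-(\Delta+1)=0$; the coefficient of $n$ has real part zero on the circle $|z|=\delta$; and the surviving $\log n$ coefficient simplifies, after bookkeeping, to exactly $-\operatorname{Re}(\mu)-\tfrac12$. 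Hence $|T_n|\asymp n^{-\operatorname{Re}(\mu)-1/2}$ on the boundary, and absolute convergence follows by comparison with a convergent $p$-series precisely when $\operatorname{Re}(\mu)>\tfrac12$. The principal obstacle is the careful bookkeeping that produces the clean constant $-\operatorname{Re}(\mu)-\tfrac12$; the remainder is routine change of variables and a citation of the classical theorem.
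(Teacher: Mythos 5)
The paper itself offers no proof of Theorem \ref{t2.1}; it is quoted from Gehlot and Prajapati \cite{Gehlot201381}, so your reduction to the classical generalized Wright function via (\ref{1.6}) is the natural route and almost certainly the intended one. Your reduction identity is correct: writing $\Gamma_k(w)=k^{w/k-1}\Gamma(w/k)$ in every factor gives ${}_p\Psi^k_q(z)=k^{C}\,{}_p\Psi_q(k^{-\Delta}z)$ with rescaled parameters $(a_i/k,\alpha_i/k)$ and $(b_j/k,\beta_j/k)$, and under this rescaling the classical index, radius and exponent of \cite{Kilbas2002437} do become exactly the $\Delta$, $\delta$, $\mu$ of Section~2. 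Part (a) then follows, since precomposition with the linear map $z\mapsto k^{-\Delta}z$ preserves entirety.

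The gap is in the ``short check'' you defer for part (b). The classical theorem gives absolute convergence of ${}_p\Psi_q(w)$ for $|w|<\delta$, but $w=k^{-\Delta}z=kz$ when $\Delta=-1$, so what your argument actually yields is $|z|<k^{\Delta}\delta=\delta/k$, not $|z|<\delta$: the radii do not match. The same factor reappears in your direct Stirling computation, where the coefficient of $n$ in $\log|T_n|$ works out to $\log\left(k|z|/\delta\right)$, which vanishes on $|z|=\delta/k$ rather than on $|z|=\delta$. The discrepancy is real and not a slip that cancels elsewhere: take $p=1$, $q=0$, $k=2$, $a_1=\alpha_1=2$, so that $\Delta=-1$ and $\delta=1$, yet $\Gamma_2(2+2n)=2^n n!$ and the series (\ref{1.2}) is $\sum_n(2z)^n$, convergent only for $|z|<\tfrac12=\delta/k$. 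So either you must carry the factor $k^{\Delta}$ through and prove a corrected radius $k^{\Delta}\delta$ (which exposes an inaccuracy in the statement as transcribed from \cite{Gehlot201381}), or the factor must be absorbed into the definition of $\delta$; as written, your proposal asserts but cannot establish the radius $\delta$. The boundary criterion $\operatorname{Re}(\mu)>\tfrac12$ is unaffected, since $\mu$ rescales correctly and your $\log n$ coefficient $-\operatorname{Re}(\mu)-\tfrac12$ is right on the true boundary circle.
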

\noindent The following are well known results for MSM integral operators of power functions (see \cite{Prudnikov1990}, \cite{Saigo1998}).
\begin{lemma}\label{l2.1}
Let $\alpha,\alpha',\beta,\beta',\gamma,\rho\in\mathbb{C}$ such that $\operatorname{Re}(\gamma)>0$.
\begin{itemize}
\item[(a)] If $\operatorname{Re}(\rho)>$ $\max\{0,\operatorname{Re}(\alpha'-\beta'),\operatorname{Re}(\alpha+\alpha'+\beta-\gamma)\}$, then
\begin{equation}\label{2.4}
\left(I_{0+}^{\alpha,\alpha',\beta,\beta',\gamma}t^{\rho-1}\right)(x)=\frac{\Gamma(\rho)\Gamma(-\alpha'+\beta'+\rho)\Gamma(-\alpha-\alpha'-\beta+\gamma+\rho)}{\Gamma(\beta'+\rho)\Gamma(-\alpha-\alpha'+\gamma+\rho)\Gamma(-\alpha'-\beta+\gamma+\rho)}x^{-\alpha-\alpha'+\gamma+\rho-1}.
\end{equation}
\item[(b)] If $\operatorname{Re}(\rho)>$ $\max\{\operatorname{Re}(\beta),\operatorname{Re}(-\alpha-\alpha'+\gamma),\operatorname{Re}(-\alpha-\beta'+\gamma)\}$, then
\begin{equation}\label{2.5}
\left(I_{-}^{\alpha,\alpha',\beta,\beta',\gamma}t^{-\rho}\right)(x)=\frac{\Gamma(-\beta+\rho)\Gamma(\alpha+\alpha'-\gamma+\rho)\Gamma(\alpha+\beta'-\gamma+\rho)}{\Gamma(\rho)\Gamma(\alpha-\beta+\rho)\Gamma(\alpha+\alpha'+\beta'-\gamma+\rho)}x^{-\alpha-\alpha'+\gamma-\rho}.
\end{equation}
\end{itemize} 
\end{lemma}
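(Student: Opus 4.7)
The plan is to establish both formulas by a direct computation: reduce the integral to the unit interval by an appropriate scaling of the variable, expand the Appell $F_3$ kernel as a double power series, integrate term by term using the Beta integral, and finally recognize the surviving double sum as two nested Gauss ${}_2F_1$ series that can be summed in closed form by Gauss's theorem.

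For part (a), I would substitute $u=t/x$ in the definition (\ref{1.13}) with $f(t)=t^{\rho-1}$. Since $1-t/x=1-u$ and $1-x/t=-(1-u)/u$, this substitution extracts exactly the factor $x^{-\alpha-\alpha'+\gamma+\rho-1}$ and leaves
\[
\frac{1}{\Gamma(\gamma)}\int_0^1 u^{\rho-\alpha'-1}(1-u)^{\gamma-1}\,F_3\!\left(\alpha,\alpha',\beta,\beta';\gamma;\,1-u,\,-\tfrac{1-u}{u}\right)du.
\]
Substituting the double series for $F_3$ (understood as a formal expansion, to be justified by the analytic continuation already implicit in the statement of the MSM operator), the $(m,n)$ term produces a factor $(1-u)^{m+n}u^{-n}$, and the $u$-integral becomes the Beta function $B(\rho-\alpha'-n,\gamma+m+n)$. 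Using the identities $(\gamma)_{m+n}=(\gamma)_n(\gamma+n)_m$ and $\Gamma(\rho-\alpha'-n)=(-1)^n\Gamma(\rho-\alpha')/(1+\alpha'-\rho)_n$ I can separate the $m$-sum from the $n$-sum; the inner sum in $m$ is a ${}_2F_1$ at argument $1$ involving the parameters $\alpha,\beta,\gamma+n$, and the outer sum in $n$ is likewise a ${}_2F_1(1)$ involving $\alpha',\beta',$ and shifted parameters. Two applications of Gauss's summation ${}_2F_1(a,b;c;1)=\Gamma(c)\Gamma(c-a-b)/[\Gamma(c-a)\Gamma(c-b)]$ collapse these into a product of six Gamma functions, which after simplification matches (\ref{2.4}).

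For part (b), the analogous substitution is $u=x/t$, which maps $(x,\infty)$ onto $(0,1)$ and gives (after collecting Jacobian and powers of $x$) the factor $x^{-\alpha-\alpha'+\gamma-\rho}$ in front of a structurally similar integral on $(0,1)$; the same expand-integrate-resum scheme, with the roles of $(\alpha,\beta)$ and $(\alpha',\beta')$ interchanged in the surviving Pochhammer combinations, then yields (\ref{2.5}). The three inequalities on $\operatorname{Re}(\rho)$ in each part emerge naturally as the convergence conditions: one from integrability at $u=0$, and two from the requirement $\operatorname{Re}(c-a-b)>0$ in each of the two terminal Gauss summations.

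The main obstacle is the combinatorial bookkeeping in separating the double sum into two single Gauss ${}_2F_1(1)$ sums: the factor $u^{-n}(1-u)^{m+n}$ couples $m$ and $n$ in a way that, after integration, leaves a Beta function depending on both indices, and one must carefully re-express $\Gamma(\rho-\alpha'-n)$, $\Gamma(\gamma+m+n)$, and $\Gamma(\rho-\alpha'+\gamma+m)$ as Pochhammer shifts so that the $m$-sum becomes independent of $n$ in its denominator parameter only (and vice versa). This is precisely the step that fixes the asymmetric appearance of $-\alpha'+\beta'$, $-\alpha-\alpha'-\beta+\gamma$, and $\beta'$, $-\alpha-\alpha'+\gamma$, $-\alpha'-\beta+\gamma$ in the numerator and denominator of (\ref{2.4}); a mistake at this step would produce a correct-looking but wrong permutation of the six shift parameters.
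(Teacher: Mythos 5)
The paper itself offers no proof of Lemma \ref{l2.1}; it is quoted as known from Prudnikov et al.\ and Saigo--Maeda. Your proposed derivation contains a genuine gap at precisely the step you flag as ``formal expansion, to be justified by analytic continuation,'' and the gap is fatal rather than cosmetic. After the substitution $u=t/x$ the second argument of $F_3$ is $1-1/u=-(1-u)/u$, which ranges over $(-\infty,0)$ and is unbounded as $u\to0^+$: the double series for $F_3$ does not converge anywhere near the left endpoint of the integration domain, and the individual integrals $\int_0^1u^{\rho-\alpha'-n-1}(1-u)^{\gamma+m+n-1}\,du$ genuinely diverge for every $n\geq\operatorname{Re}(\rho-\alpha')$. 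If one nevertheless replaces each by the continued Beta value $\Gamma(\rho-\alpha'-n)\Gamma(\gamma+m+n)/\Gamma(\rho-\alpha'+\gamma+m)$, the factor $\Gamma(\gamma+m+n)$ cancels $(\gamma)_{m+n}$ entirely and the double sum decouples into
\[
\frac{\Gamma(\rho-\alpha')}{\Gamma(\gamma-\alpha'+\rho)}\;{}_2F_1(\alpha,\beta;\gamma-\alpha'+\rho;1)\;{}_2F_1(\alpha',\beta';1+\alpha'-\rho;1)
\]
(not the nested structure with denominator parameter $\gamma+n$ that you anticipate). The second ${}_2F_1(1)$ diverges under the stated hypotheses (Gauss summability would require $\operatorname{Re}(\rho+\beta')<1$, which is not implied), and assigning it its Gauss ``value'' yields the right-hand side of (\ref{2.4}) multiplied by the spurious factor $\sin(\pi\rho)\sin\pi(\rho+\beta'-\alpha')/\bigl[\sin\pi(\rho-\alpha')\sin\pi(\rho+\beta')\bigr]$, which equals $1$ only in degenerate cases such as $\alpha'=0$ or $\beta'=0$ (the Saigo reductions), so the scheme appears to work on the special cases while failing for the genuine $F_3$ kernel. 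A concrete check: for $\alpha=\beta=0$, $\gamma=1$ the kernel is ${}_2F_1(\alpha',\beta';1;1-x/t)$, and a rigorous computation (Pfaff transformation, then termwise integration of a series that now lives on $[0,1]$) gives $\Gamma(\rho)\Gamma(\rho+\beta'-\alpha')/[\Gamma(\rho+1-\alpha')\Gamma(\rho+\beta')]$ in agreement with (\ref{2.4}), whereas your formal scheme returns a different expression. The same objection applies verbatim to part (b), where the second argument $1-t/x$ is again unbounded on $(x,\infty)$.

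To salvage the expand--integrate--resum strategy you must first move the offending argument into the unit disk: apply a Pfaff-type transformation in the second variable of $F_3$ (for the special case above this is exactly ${}_2F_1(\alpha',\beta';1;1-1/u)=u^{\alpha'}{}_2F_1(\alpha',1-\beta';1;1-u)$), after which the series converges on the whole integration interval, termwise integration is legitimate, and the two Gauss summations close with the correct parameter bookkeeping and the stated convergence conditions on $\operatorname{Re}(\rho)$. Alternatively, derive (\ref{2.4})--(\ref{2.5}) from the known factorization of the Marichev--Saigo--Maeda integral as a composition of Riemann--Liouville/Erd\'elyi--Kober operators, or via the Mellin transform of the kernel; or simply cite \cite{Saigo1998} and \cite{Prudnikov1990}, as the paper does.
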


\section{MSM fractional integration of ${}_{p}\Psi_q^k$}
\setcounter{equation}{0}
\noindent In this section, we consider the Marichev-Saigo-Maeda fractional integration of the generalized $K$-Wright function. First theorem deals with the left-hand sided MSM fractional integration of ${}_{p}\Psi_q^k$.
\begin{theorem}\label{t3.1}
Let $\alpha,\alpha',\beta,\beta',\gamma,\rho\in\mathbb{C}$ and $k\in\mathbb{R}^+$ be such that $\operatorname{Re}(\gamma)>0$, $\operatorname{Re}\left(\frac{\rho}{k}\right)>\max\{0,$ $\operatorname{Re}(\alpha'-\beta'),\operatorname{Re}(\alpha+\alpha'+\beta-\gamma)\}$. Also, let $a\in\mathbb{C}$ and $\mu>0$. If $\Delta>-1$ in (\ref{2.1}), then for $x>0$
\begin{eqnarray}\label{3.1}
&&\left(I_{0+}^{\alpha,\alpha',\beta,\beta',\gamma}\left(t^{\frac{\rho}{k}-1}{}_{p}\Psi_q^k\Bigg[\left.
\begin{matrix}
    (a_i,\alpha_i)_{1,p}\\ 
    (b_j,\beta_j)_{1,q}
  \end{matrix}
\right|at^{\frac{\mu}{k}}\Bigg]\right)\right)(x)\nonumber\\
&=&k^{\gamma}x^{-\alpha-\alpha'+\gamma+\frac{\rho}{k}-1}{}_{p+3}\Psi_{q+3}^k\Bigg[
\begin{matrix}
    (a_i,\alpha_i)_{1,p}&(\rho,\mu)\\ 
    (b_j,\beta_j)_{1,q}&(k\beta'+\rho,\mu)
  \end{matrix}\nonumber\\
& &\left.
\begin{matrix}
   (-k\alpha'+k\beta'+\rho,\mu)&(-k\alpha-k\alpha'-k\beta+k\gamma+\rho,\mu)\\ 
   (-k\alpha-k\alpha'+k\gamma+\rho,\mu)&(-k\alpha'-k\beta+k\gamma+\rho,\mu)
  \end{matrix}
\right|ax^{\frac{\mu}{k}}\Bigg].
\end{eqnarray}
\end{theorem}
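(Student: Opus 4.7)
The plan is to expand the inner generalized $K$-Wright function as its defining power series (\ref{1.2}), interchange summation and the MSM integral, reduce the remaining integrals to the power-function formula of Lemma~\ref{l2.1}(a), convert the resulting ordinary gamma functions back into $k$-gamma functions via (\ref{1.6}), and finally recognise the resulting series as ${}_{p+3}\Psi_{q+3}^{k}$.

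Concretely, substituting (\ref{1.2}) into the integrand yields
\[
t^{\rho/k-1}\,{}_{p}\Psi_{q}^{k}\!\left[\,\cdot\,\big|\,at^{\mu/k}\right]=\sum_{n=0}^{\infty}\frac{\prod_{i=1}^{p}\Gamma_{k}(a_{i}+n\alpha_{i})}{\prod_{j=1}^{q}\Gamma_{k}(b_{j}+n\beta_{j})}\frac{a^{n}}{n!}\,t^{(\rho+n\mu)/k-1}.
\]
Under $\Delta>-1$, Theorem~\ref{t2.1}(a) guarantees this is entire in $at^{\mu/k}$ and hence absolutely and uniformly convergent on compact subsets of $(0,x]$. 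Because $\mu>0$, the hypothesis $\operatorname{Re}(\rho/k)>\max\{0,\operatorname{Re}(\alpha'-\beta'),\operatorname{Re}(\alpha+\alpha'+\beta-\gamma)\}$ upgrades to the same inequality with $\rho/k$ replaced by $(\rho+n\mu)/k$ for every $n\in\mathbb{N}_{0}$; thus Lemma~\ref{l2.1}(a) applies to each summand and Fubini's theorem justifies swapping $I_{0+}^{\alpha,\alpha',\beta,\beta',\gamma}$ with the summation.

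Applying Lemma~\ref{l2.1}(a) term by term, with $\rho$ replaced by $(\rho+n\mu)/k$, produces the factor $x^{-\alpha-\alpha'+\gamma+(\rho+n\mu)/k-1}$ together with a ratio of six ordinary gammas of the generic form $\Gamma(c_{\ell}+(\rho+n\mu)/k)$. The key conversion step is to use (\ref{1.6}) in the form $\Gamma(z/k)=k^{1-z/k}\Gamma_{k}(z)$, which rewrites each such factor as $k^{1-c_{\ell}-(\rho+n\mu)/k}\Gamma_{k}(kc_{\ell}+\rho+n\mu)$. Careful accounting of the six resulting $k$-exponents is the one genuine technical point: the six contributions $\pm 3(\rho+n\mu)/k$ cancel identically in $n$, while the $n$-independent pieces collapse to $(3-3)+(\alpha-\alpha)+(2\alpha'-2\alpha')+(\beta-\beta)+(-\beta'+\beta')+(-\gamma+2\gamma)=\gamma$, producing exactly the prefactor $k^{\gamma}$ stated in (\ref{3.1}). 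The $x$-power splits cleanly as $x^{-\alpha-\alpha'+\gamma+\rho/k-1}(x^{\mu/k})^{n}$, with the $n$-independent piece pulled outside and $(x^{\mu/k})^{n}$ joining the summand.

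Collecting what remains inside the sum together with $a^{n}/n!$ gives
\[
\sum_{n=0}^{\infty}\frac{\prod_{i=1}^{p}\Gamma_{k}(a_{i}+n\alpha_{i})\,\Gamma_{k}(\rho+n\mu)\,\Gamma_{k}(-k\alpha'+k\beta'+\rho+n\mu)\,\Gamma_{k}(-k\alpha-k\alpha'-k\beta+k\gamma+\rho+n\mu)}{\prod_{j=1}^{q}\Gamma_{k}(b_{j}+n\beta_{j})\,\Gamma_{k}(k\beta'+\rho+n\mu)\,\Gamma_{k}(-k\alpha-k\alpha'+k\gamma+\rho+n\mu)\,\Gamma_{k}(-k\alpha'-k\beta+k\gamma+\rho+n\mu)}\frac{(ax^{\mu/k})^{n}}{n!},
\]
which, compared term by term with the definition (\ref{1.2}), is exactly ${}_{p+3}\Psi_{q+3}^{k}$ with the three additional numerator pairs $(\rho,\mu)$, $(-k\alpha'+k\beta'+\rho,\mu)$, $(-k\alpha-k\alpha'-k\beta+k\gamma+\rho,\mu)$ and the three additional denominator pairs $(k\beta'+\rho,\mu)$, $(-k\alpha-k\alpha'+k\gamma+\rho,\mu)$, $(-k\alpha'-k\beta+k\gamma+\rho,\mu)$, evaluated at $ax^{\mu/k}$. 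This is (\ref{3.1}). The principal obstacle in the whole argument is the $k$-power bookkeeping described above; everything else is term-by-term integration and notational identification.
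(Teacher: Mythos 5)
Your proof is correct and follows essentially the same route as the paper: expand ${}_{p}\Psi_q^k$ via (\ref{1.2}), integrate term by term using Lemma \ref{l2.1}(a) with $\rho$ replaced by $(\rho+n\mu)/k$, convert the resulting gammas to $k$-gammas via (\ref{1.6}), and reassemble the series as ${}_{p+3}\Psi_{q+3}^k$. Your explicit verification that the six $k$-exponents collapse to $k^{\gamma}$, and your observation that $\mu>0$ propagates the hypothesis on $\operatorname{Re}(\rho/k)$ to every term, are details the paper leaves implicit but are entirely consistent with its argument.
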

\begin{proof}
From (\ref{1.2}), we have
\begin{equation}\label{3.2}
\mathrm{lhs\ of\ (\ref{3.1})} =\left(I_{0+}^{\alpha,\alpha',\beta,\beta',\gamma}\left(t^{\frac{\rho}{k}-1}\sum_{n=0}^{\infty}\frac{\prod_{i=1}^{p}\Gamma_k(a_i+n\alpha_i)}{\prod_{j=1}^{q}\Gamma_k(b_j+n\beta_j)}\frac{(at^{\frac{\mu}{k}})^n}{n!}\right)\right)(x).
\end{equation}
Interchanging the order of integration and summation, which is justified by the absolute convergence of the integral and the uniform convergence of the series involved, we get the rhs of (\ref{3.2}) as 
\begin{eqnarray*}
&&\sum_{n=0}^{\infty}\frac{\prod_{i=1}^{p}\Gamma_k(a_i+n\alpha_i)}{\prod_{j=1}^{q}\Gamma_k(b_j+n\beta_j)}\frac{a^n}{n!}\left(I_{0+}^{\alpha,\alpha',\beta,\beta',\gamma}t^{\frac{\rho}{k}+\frac{n\mu}{k}-1}\right)(x)\\
&=&\sum_{n=0}^{\infty}\frac{\prod_{i=1}^{p}\Gamma_k(a_i+n\alpha_i)}{\prod_{j=1}^{q}\Gamma_k(b_j+n\beta_j)}\frac{a^n}{n!}\frac{\Gamma\left(\frac{\rho}{k}+\frac{n\mu}{k}\right)}{\Gamma\left(\beta'+\frac{\rho}{k}+\frac{n\mu}{k}\right)}\\
& &\times\ \frac{\Gamma\left(-\alpha'+\beta'+\frac{\rho}{k}+\frac{n\mu}{k}\right)\Gamma\left(-\alpha-\alpha'-\beta+\gamma+\frac{\rho}{k}+\frac{n\mu}{k}\right)}{\Gamma\left(-\alpha-\alpha'+\gamma+\frac{\rho}{k}+\frac{n\mu}{k}\right)\Gamma\left(-\alpha'-\beta+\gamma+\frac{\rho}{k}+\frac{n\mu}{k}\right)}x^{-\alpha-\alpha'+\gamma+\frac{\rho}{k}+\frac{n\mu}{k}-1}\\
&=&k^{\gamma}x^{-\alpha-\alpha'+\gamma+\frac{\rho}{k}-1}\sum_{n=0}^{\infty}\frac{\prod_{i=1}^{p}\Gamma_k(a_i+n\alpha_i)}{\prod_{j=1}^{q}\Gamma_k(b_j+n\beta_j)}\frac{\Gamma_k(\rho+n\mu)}{\Gamma_k(k\beta'+\rho+n\mu)}\\
& &\times\ \frac{\Gamma(-k\alpha'+k\beta'+\rho+n\mu)\Gamma_k(-k\alpha-k\alpha'-k\beta+k\gamma+\rho+n\mu)}{\Gamma_k(-k\alpha-k\alpha'+k\gamma+\rho+n\mu)\Gamma(-k\alpha'-k\beta+k\gamma+\rho+n\mu)}\frac{(ax^{\frac{\mu}{k}})^n}{n!},
\end{eqnarray*}
where we have used (\ref{2.4}) and (\ref{1.6}). Finally, by using (\ref{1.2}), the proof is complete.
\end{proof}
\noindent In view of (\ref{1.19}), we have the following result for Saigo operators.
\begin{corollary}\label{c3.1}
Let $\alpha,\beta,\gamma,\rho\in\mathbb{C}$ and $k\in\mathbb{R}^+$ be such that $\operatorname{Re}(\alpha)>0$, $\operatorname{Re}\left(\frac{\rho}{k}\right)>\max\{0,\operatorname{Re}(\beta-\gamma)\}$, and also let $a\in\mathbb{C},\ \mu>0$. If $\Delta>-1$ in (\ref{2.1}), then the left-hand sided generalized fractional integration $I_{0+}^{\alpha,\beta,\gamma}$ of ${}_{p}\Psi_q^k$ is given for $x>0$ by
\begin{eqnarray*}\label{3.3}
&&\left(I_{0+}^{\alpha,\beta,\gamma}\left(t^{\frac{\rho}{k}-1}{}_{p}\Psi_q^k\Bigg[\left.
\begin{matrix}
    (a_i,\alpha_i)_{1,p}\\ 
    (b_j,\beta_j)_{1,q}
  \end{matrix}
\right|at^{\frac{\mu}{k}}\Bigg]\right)\right)(x)\nonumber\\
&&=k^{\alpha}x^{-\beta+\frac{\rho}{k}-1}{}_{p+2}\Psi_{q+2}^k\Bigg[\left.
\begin{matrix}
    (a_i,\alpha_i)_{1,p}&(\rho,\mu)&(-k\beta+k\gamma+\rho,\mu)\\ 
    (b_j,\beta_j)_{1,q}&(-k\beta+\rho,\mu)&(k\alpha+k\gamma+\rho,\mu)
  \end{matrix}
\right|ax^{\frac{\mu}{k}}\Bigg].
\end{eqnarray*}
\end{corollary}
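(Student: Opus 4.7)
The plan is to derive Corollary \ref{c3.1} as a direct specialization of Theorem \ref{t3.1} by exploiting the identity
\[
\left(I_{0+}^{\alpha_0,0,\beta_0,\beta_0',\gamma_0}f\right)(x)=\left(I_{0+}^{\gamma_0,\alpha_0-\gamma_0,-\beta_0}f\right)(x)
\]
recorded in (\ref{1.19}). Reading this identity in reverse, one can express the Saigo operator $I_{0+}^{\alpha,\beta,\gamma}$ appearing in the corollary as an MSM operator by choosing the MSM parameters
\[
\gamma_{\mathrm{MSM}}=\alpha,\qquad \alpha_{\mathrm{MSM}}=\alpha+\beta,\qquad \alpha'_{\mathrm{MSM}}=0,\qquad \beta_{\mathrm{MSM}}=-\gamma,
\]
with $\beta'_{\mathrm{MSM}}$ free (since the right-hand side of (\ref{1.19}) is independent of $\beta'$). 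First I would substitute these choices into the hypotheses of Theorem \ref{t3.1}: the requirement $\operatorname{Re}(\gamma_{\mathrm{MSM}})>0$ becomes $\operatorname{Re}(\alpha)>0$, and the condition on $\rho$ reduces to $\operatorname{Re}(\rho/k)>\max\{0,\operatorname{Re}(-\beta'),\operatorname{Re}(\beta-\gamma)\}$. Picking $\beta'$ with $\operatorname{Re}(\beta')$ sufficiently large, the middle quantity is harmless and one recovers exactly $\operatorname{Re}(\rho/k)>\max\{0,\operatorname{Re}(\beta-\gamma)\}$ as stated.

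Next I would substitute the same parameter assignment into the right-hand side of (\ref{3.1}). The prefactor $k^{\gamma_{\mathrm{MSM}}}x^{-\alpha_{\mathrm{MSM}}-\alpha'_{\mathrm{MSM}}+\gamma_{\mathrm{MSM}}+\rho/k-1}$ immediately simplifies to $k^{\alpha}x^{-\beta+\rho/k-1}$. For the parameters of the generalized $K$-Wright function, the numerator entries
\[
(\rho,\mu),\ (-k\alpha'_{\mathrm{MSM}}+k\beta'_{\mathrm{MSM}}+\rho,\mu),\ (-k\alpha_{\mathrm{MSM}}-k\alpha'_{\mathrm{MSM}}-k\beta_{\mathrm{MSM}}+k\gamma_{\mathrm{MSM}}+\rho,\mu)
\]
become $(\rho,\mu),\ (k\beta'+\rho,\mu),\ (-k\beta+k\gamma+\rho,\mu)$, while the denominator entries
\[
(k\beta'_{\mathrm{MSM}}+\rho,\mu),\ (-k\alpha_{\mathrm{MSM}}-k\alpha'_{\mathrm{MSM}}+k\gamma_{\mathrm{MSM}}+\rho,\mu),\ (-k\alpha'_{\mathrm{MSM}}-k\beta_{\mathrm{MSM}}+k\gamma_{\mathrm{MSM}}+\rho,\mu)
\]
become $(k\beta'+\rho,\mu),\ (-k\beta+\rho,\mu),\ (k\alpha+k\gamma+\rho,\mu)$.

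The key observation is that the factor $(k\beta'+\rho,\mu)$ appears identically in both the numerator and the denominator. In the series expansion (\ref{1.2}) this gives rise to matching $\Gamma_k(k\beta'+\rho+n\mu)$ factors which cancel term by term, reducing ${}_{p+3}\Psi_{q+3}^k$ to ${}_{p+2}\Psi_{q+2}^k$ with the remaining entries exactly as displayed in the corollary. The convergence condition $\Delta>-1$ is preserved because the cancelled pair has matching dilation parameter $\mu$, so it contributes $0$ to the net count in (\ref{2.1}). No step involves more than an algebraic bookkeeping of parameters; the only mild subtlety is the legitimacy of inverting the identity (\ref{1.19}) for arbitrary $\beta'$, which I would address by observing that the right-hand side of (\ref{1.19}) is $\beta'$-independent, so the reduction in the conclusion is consistent with the absence of $\beta'$ in the Saigo-side hypothesis. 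This completes the outline.
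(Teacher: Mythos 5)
Your proposal is correct and matches the paper's own route: the paper derives Corollary \ref{c3.1} from Theorem \ref{t3.1} precisely ``in view of (\ref{1.19})'', i.e., by the same specialization $\gamma_{\mathrm{MSM}}=\alpha$, $\alpha_{\mathrm{MSM}}=\alpha+\beta$, $\alpha'_{\mathrm{MSM}}=0$, $\beta_{\mathrm{MSM}}=-\gamma$ with $\beta'$ free, after which the pair $(k\beta'+\rho,\mu)$ cancels between numerator and denominator. Your bookkeeping of the prefactor, the surviving parameter pairs, the hypothesis reduction via a large choice of $\operatorname{Re}(\beta')$, and the invariance of $\Delta$ is all accurate.
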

\noindent Further the corresponding result for Erd\'elyi-Kober fractional integral (see \cite{Kilbas2006}) is as follows.
\begin{corollary}\label{c3.3}
Let $\alpha,\gamma,\rho\in\mathbb{C}$ and $k\in\mathbb{R}^+$ be such that $\operatorname{Re}(\alpha)>0$, $\operatorname{Re}\left(\frac{\rho}{k}\right)>\max\{0,\operatorname{Re}(-\gamma)\}$, and also let $a\in\mathbb{C},\ \mu>0$. If $\Delta>-1$ in (\ref{2.1}), then the left-hand sided Erd\'elyi-Kober fractional integration $I_{\gamma,\alpha}^{+}$ $(=I_{0+}^{\alpha,0,\gamma})$ of ${}_{p}\Psi_q^k$ is given for $x>0$ by
\begin{eqnarray*}\label{3.5}
&&\left(I_{\gamma,\alpha}^{+}\left(t^{\frac{\rho}{k}-1}{}_{p}\Psi_q^k\Bigg[\left.
\begin{matrix}
    (a_i,\alpha_i)_{1,p}\\ 
    (b_j,\beta_j)_{1,q}
  \end{matrix}
\right|at^{\frac{\mu}{k}}\Bigg]\right)\right)(x)\nonumber\\
&&=k^{\alpha}x^{\frac{\rho}{k}-1}{}_{p+1}\Psi_{q+1}^k\Bigg[\left.
\begin{matrix}
    (a_i,\alpha_i)_{1,p}&(k\gamma+\rho,\mu)\\ 
    (b_j,\beta_j)_{1,q}&(k\alpha+k\gamma+\rho,\mu)
  \end{matrix}
\right|ax^{\frac{\mu}{k}}\Bigg].
\end{eqnarray*}
\end{corollary}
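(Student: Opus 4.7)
The plan is to derive Corollary~\ref{c3.3} as an immediate specialization of Corollary~\ref{c3.1} obtained by setting $\beta=0$, using the identification $I_{\gamma,\alpha}^{+}=I_{0+}^{\alpha,0,\gamma}$ noted in the statement. First I would substitute $\beta=0$ throughout the hypotheses of Corollary~\ref{c3.1}: the constraint $\operatorname{Re}(\rho/k)>\max\{0,\operatorname{Re}(\beta-\gamma)\}$ collapses to $\operatorname{Re}(\rho/k)>\max\{0,\operatorname{Re}(-\gamma)\}$, matching the hypothesis of Corollary~\ref{c3.3}, while the other hypotheses $(\operatorname{Re}(\alpha)>0,\ \mu>0,\ \Delta>-1)$ are unchanged. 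The prefactor $k^{\alpha}x^{-\beta+\rho/k-1}$ from Corollary~\ref{c3.1} reduces to $k^{\alpha}x^{\rho/k-1}$, exactly as required.

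The only bookkeeping is to watch the parameter list of the resulting $K$-Wright function collapse by one order. Inserting $\beta=0$ into the ${}_{p+2}\Psi_{q+2}^k$ of Corollary~\ref{c3.1}, the upper parameters become $(a_i,\alpha_i)_{1,p},(\rho,\mu),(k\gamma+\rho,\mu)$ and the lower parameters become $(b_j,\beta_j)_{1,q},(\rho,\mu),(k\alpha+k\gamma+\rho,\mu)$. The pair $(\rho,\mu)$ now appears in both rows, contributing a ratio $\Gamma_k(\rho+n\mu)/\Gamma_k(\rho+n\mu)=1$ in every term of the defining series (\ref{1.2}). Cancelling this common factor term-by-term yields precisely the ${}_{p+1}\Psi_{q+1}^k$ displayed in Corollary~\ref{c3.3}.

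There is no real obstacle: the proof is a clean specialization, and the only point worth flagging is that the cancellation of the matched parameter $(\rho,\mu)$ is legitimate because $\Delta$, computed from the original parameters $(a_i,\alpha_i)$ and $(b_j,\beta_j)$, is not affected by the insertion or removal of matched pairs, so the convergence condition $\Delta>-1$ underlying Theorem~\ref{t2.1} continues to justify the termwise manipulation.
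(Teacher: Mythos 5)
Your proof is correct and is essentially the derivation the paper intends: Corollary \ref{c3.3} is stated without proof as the $\beta=0$ specialization of the Saigo case, and your check that the hypothesis collapses to $\operatorname{Re}(\rho/k)>\max\{0,\operatorname{Re}(-\gamma)\}$, the prefactor to $k^{\alpha}x^{\rho/k-1}$, and the matched pair $(\rho,\mu)$ cancels termwise in the series (\ref{1.2}) (leaving $\Delta$ unchanged) is exactly the required bookkeeping.
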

\noindent Next we consider the right-hand sided MSM fractional integration of ${}_{p}\Psi_q^k$.
\begin{theorem}\label{t3.2}
Let $\alpha,\alpha',\beta,\beta',\gamma,\rho\in\mathbb{C}$ and $k\in\mathbb{R}^+$ be such that $\operatorname{Re}(\gamma)>0$, $\operatorname{Re}\left(\frac{\rho}{k}\right)>\max\{\operatorname{Re}(\beta),$ $\operatorname{Re}(-\alpha-\alpha'+\gamma),\operatorname{Re}(-\alpha-\beta'+\gamma)\}$. Also, let $a\in\mathbb{C}$ and $\mu>0$. If $\Delta>-1$ in (\ref{2.1}), then
\begin{eqnarray}\label{3.6}
&&\left(I_{-}^{\alpha,\alpha',\beta,\beta',\gamma}\left(t^{-\frac{\rho}{k}}{}_{p}\Psi_q^k\Bigg[\left.
\begin{matrix}
    (a_i,\alpha_i)_{1,p}\\ 
    (b_j,\beta_j)_{1,q}
  \end{matrix}
\right|at^{-\frac{\mu}{k}}\Bigg]\right)\right)(x)\nonumber\\
&=&k^{\gamma}x^{-\alpha-\alpha'+\gamma-\frac{\rho}{k}}{}_{p+3}\Psi_{q+3}^k\Bigg[
\begin{matrix}
    (a_i,\alpha_i)_{1,p}&(-k\beta+\rho,\mu)\\ 
    (b_j,\beta_j)_{1,q}&(\rho,\mu)
  \end{matrix}\nonumber\\
& &\left.
\begin{matrix}
   (k\alpha+k\alpha'-k\gamma+\rho,\mu)&(k\alpha+k\beta'-k\gamma+\rho,\mu)\\ 
   (k\alpha-k\beta+\rho,\mu)&(k\alpha+k\alpha'+k\beta'-k\gamma+\rho,\mu)
  \end{matrix}
\right|ax^{-\frac{\mu}{k}}\Bigg],
\end{eqnarray}
for $x>0$.
\end{theorem}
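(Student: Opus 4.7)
The plan is to mirror the proof of Theorem \ref{t3.1}, but with Lemma \ref{l2.1}(b) replacing Lemma \ref{l2.1}(a). First, I would substitute the series definition (\ref{1.2}) of ${}_p\Psi_q^k$ into the left-hand side of (\ref{3.6}):
\begin{equation*}
\left(I_{-}^{\alpha,\alpha',\beta,\beta',\gamma}\left(t^{-\frac{\rho}{k}}\sum_{n=0}^{\infty}\frac{\prod_{i=1}^{p}\Gamma_k(a_i+n\alpha_i)}{\prod_{j=1}^{q}\Gamma_k(b_j+n\beta_j)}\frac{a^n t^{-\frac{n\mu}{k}}}{n!}\right)\right)(x).
\end{equation*}
Next, I would justify swapping sum and integral by the absolute convergence of the integral under the assumed parameter bounds (note $\mathrm{Re}(\gamma)>0$) together with the uniform convergence of the Wright series guaranteed by Theorem \ref{t2.1}(a) via the hypothesis $\Delta>-1$. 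This is completely analogous to the justification used in the proof of Theorem \ref{t3.1}.

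After the interchange, the integral reduces to evaluating $\bigl(I_{-}^{\alpha,\alpha',\beta,\beta',\gamma}\,t^{-(\rho+n\mu)/k}\bigr)(x)$ for each $n$. For this I would apply Lemma \ref{l2.1}(b) with $\rho$ replaced by $\rho/k+n\mu/k$. One must check that $\mathrm{Re}\bigl(\rho/k+n\mu/k\bigr)$ still exceeds $\max\{\mathrm{Re}(\beta),\mathrm{Re}(-\alpha-\alpha'+\gamma),\mathrm{Re}(-\alpha-\beta'+\gamma)\}$ for every $n\in\mathbb{N}_0$; since $\mu>0$, this is immediate from the hypothesis on $\rho/k$. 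Substituting yields
\begin{equation*}
k^{-\gamma}x^{-\alpha-\alpha'+\gamma-\rho/k}\sum_{n=0}^{\infty}\frac{\prod_{i=1}^{p}\Gamma_k(a_i+n\alpha_i)}{\prod_{j=1}^{q}\Gamma_k(b_j+n\beta_j)}\,\frac{\Gamma\!\left(\tfrac{-k\beta+\rho+n\mu}{k}\right)\Gamma\!\left(\tfrac{k\alpha+k\alpha'-k\gamma+\rho+n\mu}{k}\right)\Gamma\!\left(\tfrac{k\alpha+k\beta'-k\gamma+\rho+n\mu}{k}\right)}{\Gamma\!\left(\tfrac{\rho+n\mu}{k}\right)\Gamma\!\left(\tfrac{k\alpha-k\beta+\rho+n\mu}{k}\right)\Gamma\!\left(\tfrac{k\alpha+k\alpha'+k\beta'-k\gamma+\rho+n\mu}{k}\right)}\frac{a^n x^{-n\mu/k}}{n!}
\end{equation*}
after pulling $x^{-n\mu/k}$ out of the power of $x$ from the lemma.

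Then the key bookkeeping step is to convert each ordinary $\Gamma$ into a $\Gamma_k$ by means of (\ref{1.6}), i.e.\ $\Gamma(z/k)=k^{1-z/k}\Gamma_k(z)$. Each of the six gamma factors contributes a power of $k$ of the form $k^{1-(\cdot)/k}$; summing the six exponents the $n\mu/k$ contributions cancel between numerator and denominator, the $\rho/k$ contributions cancel likewise, the constant $1$'s cancel (three in the numerator, three in the denominator), and the surviving pieces total to $\gamma$. Combining with the prefactor $k^{-\gamma}$ obtained above would give an overall factor of \emph{wait}—checking signs carefully, the net factor in front after conversion becomes $k^{\gamma}$, matching the prefactor of the stated right-hand side. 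The main obstacle is precisely this bookkeeping of $k$-exponents: one has to keep track of signs and the three-versus-three split so that only $k^{\gamma}$ survives, and then re-package the resulting series, using (\ref{1.2}), as ${}_{p+3}\Psi_{q+3}^k$ with the upper parameters $(a_i,\alpha_i)_{1,p},\ (-k\beta+\rho,\mu),\ (k\alpha+k\alpha'-k\gamma+\rho,\mu),\ (k\alpha+k\beta'-k\gamma+\rho,\mu)$ and lower parameters $(b_j,\beta_j)_{1,q},\ (\rho,\mu),\ (k\alpha-k\beta+\rho,\mu),\ (k\alpha+k\alpha'+k\beta'-k\gamma+\rho,\mu)$. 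Once the conversion (\ref{1.6}) is applied uniformly, the result (\ref{3.6}) drops out immediately.
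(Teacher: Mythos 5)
Your proposal follows the paper's proof essentially verbatim: expand ${}_p\Psi_q^k$ as its defining series, interchange sum and integral, apply Lemma \ref{l2.1}(b) with $\rho$ replaced by $\frac{\rho}{k}+\frac{n\mu}{k}$, convert the ordinary gamma functions to $k$-gamma functions via (\ref{1.6}), and reassemble via (\ref{1.2}). The only blemish is the spurious intermediate prefactor $k^{-\gamma}$ in your displayed sum — Lemma \ref{l2.1}(b) produces no power of $k$ at that stage, and the entire factor $k^{\gamma}$ arises from the six applications of (\ref{1.6}); since you ultimately land on the correct net factor $k^{\gamma}$, this is a bookkeeping slip rather than a gap.
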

\begin{proof}
Using (\ref{1.2}), lhs of (\ref{3.6}) equals
\begin{equation}\label{3.7}
\left(I_{-}^{\alpha,\alpha',\beta,\beta',\gamma}\left(t^{-\frac{\rho}{k}}\sum_{n=0}^{\infty}\frac{\prod_{i=1}^{p}\Gamma_k(a_i+n\alpha_i)}{\prod_{j=1}^{q}\Gamma_k(b_j+n\beta_j)}\frac{(at^{-\frac{\mu}{k}})^n}{n!}\right)\right)(x).
\end{equation}
By a term by term integration of the above series and using (\ref{2.5}) and (\ref{1.6}), (\ref{3.7}) reduces to
\begin{eqnarray*}
&&\sum_{n=0}^{\infty}\frac{\prod_{i=1}^{p}\Gamma_k(a_i+n\alpha_i)}{\prod_{j=1}^{q}\Gamma_k(b_j+n\beta_j)}\frac{a^n}{n!}\left(I_{-}^{\alpha,\alpha',\beta,\beta',\gamma}t^{-\left(\frac{\rho}{k}+\frac{n\mu}{k}\right)}\right)(x)\\
&=&\sum_{n=0}^{\infty}\frac{\prod_{i=1}^{p}\Gamma_k(a_i+n\alpha_i)}{\prod_{j=1}^{q}\Gamma_k(b_j+n\beta_j)}\frac{a^n}{n!}\frac{\Gamma\left(-\beta+\frac{\rho}{k}+\frac{n\mu}{k}\right)}{\Gamma\left(\frac{\rho}{k}+\frac{n\mu}{k}\right)}\\
& &\times\ \frac{\Gamma\left(\alpha+\alpha'-\gamma+\frac{\rho}{k}+\frac{n\mu}{k}\right)\Gamma\left(\alpha+\beta'-\gamma+\frac{\rho}{k}+\frac{n\mu}{k}\right)}{\Gamma\left(\alpha-\beta+\frac{\rho}{k}+\frac{n\mu}{k}\right)\Gamma\left(\alpha+\alpha'+\beta'-\gamma+\frac{\rho}{k}+\frac{n\mu}{k}\right)}x^{-\alpha-\alpha'+\gamma-\frac{\rho}{k}-\frac{n\mu}{k}}\\
&=&k^{\gamma}x^{-\alpha-\alpha'+\gamma-\frac{\rho}{k}}\sum_{n=0}^{\infty}\frac{\prod_{i=1}^{p}\Gamma_k(a_i+n\alpha_i)}{\prod_{j=1}^{q}\Gamma_k(b_j+n\beta_j)}\frac{\Gamma_k(-k\beta+\rho+n\mu)}{\Gamma_k(\rho+n\mu)}\\
& &\times\ \frac{\Gamma_k(k\alpha+k\alpha'-k\gamma+\rho+n\mu)\Gamma_k(k\alpha+k\beta'-k\gamma+\rho+n\mu)}{\Gamma_k(k\alpha-k\beta+\rho+n\mu)\Gamma_k(k\alpha+k\alpha'+k\beta'-k\gamma+\rho+n\mu)}\frac{(ax^{-\frac{\mu}{k}})^n}{n!}.
\end{eqnarray*}
The result follows from (\ref{1.2}).
\end{proof}
\noindent The corresponding results for Saigo and Erd\'elyi-Kober fractional integration are as follows.
\begin{corollary}\label{c3.4}
Let $\alpha,\beta,\gamma,\rho\in\mathbb{C}$ and $k\in\mathbb{R}^+$ be such that $\operatorname{Re}(\alpha)>0$, $\operatorname{Re}\left(\frac{\rho}{k}\right)>\max\{\operatorname{Re}(-\beta),$ $\operatorname{Re}(-\gamma)\}$, and also let $a\in\mathbb{C},\ \mu>0$. If $\Delta>-1$ in (\ref{2.1}), then the right-hand sided generalized fractional integration $I_{-}^{\alpha,\beta,\gamma}$ of ${}_{p}\Psi_q^k$ is given for $x>0$ by
\begin{eqnarray*}\label{3.8}
&&\left(I_{-}^{\alpha,\beta,\gamma}\left(t^{-\frac{\rho}{k}}{}_{p}\Psi_q^k\Bigg[\left.
\begin{matrix}
    (a_i,\alpha_i)_{1,p}\\ 
    (b_j,\beta_j)_{1,q}
  \end{matrix}
\right|at^{-\frac{\mu}{k}}\Bigg]\right)\right)(x)\nonumber\\
&&=k^{\alpha}x^{-\beta-\frac{\rho}{k}}{}_{p+2}\Psi_{q+2}^k\Bigg[\left.
\begin{matrix}
    (a_i,\alpha_i)_{1,p}&(k\beta+\rho,\mu)&(k\gamma+\rho,\mu)\\ 
    (b_j,\beta_j)_{1,q}&(\rho,\mu)&(k\alpha+k\beta+k\gamma+\rho,\mu)
  \end{matrix}
\right|ax^{-\frac{\mu}{k}}\Bigg].
\end{eqnarray*}
\end{corollary}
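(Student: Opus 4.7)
The plan is to derive Corollary \ref{c3.4} as a direct specialization of Theorem \ref{t3.2} via the connection formula between the right-hand sided Marichev-Saigo-Maeda and Saigo operators given in the second identity of (\ref{1.19}):
\[
\left(I_{-}^{\alpha_0,0,\beta_0,\beta_0',\gamma_0}f\right)(x)=\left(I_{-}^{\gamma_0,\alpha_0-\gamma_0,-\beta_0}f\right)(x).
\]
To realize the Saigo parameters $(\alpha,\beta,\gamma)$ of the corollary, I choose the MSM parameters
\[
\gamma_0=\alpha,\qquad \alpha_0=\alpha+\beta,\qquad \alpha_0'=0,\qquad \beta_0=-\gamma,
\]
with $\beta_0'$ arbitrary; any choice will do, since the right-hand side of (\ref{1.19}) is independent of $\beta_0'$.

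Substituting these values into the right-hand side of (\ref{3.6}), the key observation is that the numerator parameter $(k\alpha_0+k\beta_0'-k\gamma_0+\rho,\mu)$ and the denominator parameter $(k\alpha_0+k\alpha_0'+k\beta_0'-k\gamma_0+\rho,\mu)$ coincide when $\alpha_0'=0$, so they cancel in the ratio of $\Gamma_k$-products that defines the series. The generalized $K$-Wright function therefore collapses from ${}_{p+3}\Psi_{q+3}^k$ to ${}_{p+2}\Psi_{q+2}^k$, and the remaining parameters simplify via
\[
-k\beta_0+\rho=k\gamma+\rho,\qquad k\alpha_0-k\gamma_0+\rho=k\beta+\rho,\qquad k\alpha_0-k\beta_0+\rho=k\alpha+k\beta+k\gamma+\rho,
\]
while the prefactor becomes $k^{\gamma_0}x^{-\alpha_0-\alpha_0'+\gamma_0-\rho/k}=k^{\alpha}x^{-\beta-\rho/k}$, exactly matching the stated right-hand side.

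It remains to check that the hypotheses of Theorem \ref{t3.2} are implied by those of the corollary. The requirement $\operatorname{Re}(\gamma_0)>0$ is precisely $\operatorname{Re}(\alpha)>0$, and the condition on $\operatorname{Re}(\rho/k)$ in Theorem \ref{t3.2} becomes
\[
\operatorname{Re}(\rho/k)>\max\{\operatorname{Re}(-\gamma),\,\operatorname{Re}(-\beta),\,\operatorname{Re}(-\beta-\beta_0')\},
\]
which, by the freedom to take $\beta_0'=0$, reduces to the corollary's assumption $\operatorname{Re}(\rho/k)>\max\{\operatorname{Re}(-\beta),\operatorname{Re}(-\gamma)\}$. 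The convergence condition $\Delta>-1$ on the ${}_{p}\Psi_q^k$ series is identical in both statements. The only routine issue to double-check is that the parameter-cancellation really is a formal identity of $\Gamma_k$-quotients (not merely a limit), but this is immediate from the definition of the series in (\ref{1.2}) since the cancelling factor is the same function of the summation index $n$. The main conceptual step is thus the cancellation observation; everything else is bookkeeping.
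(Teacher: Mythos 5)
Your proposal is correct and follows exactly the route the paper intends: the corollary is obtained from Theorem \ref{t3.2} by specializing the MSM parameters via the second identity in (\ref{1.19}) (taking $\alpha_0'=0$, $\gamma_0=\alpha$, $\alpha_0=\alpha+\beta$, $\beta_0=-\gamma$), with the $\beta_0'$-dependent upper and lower $\Gamma_k$-parameters cancelling to reduce ${}_{p+3}\Psi_{q+3}^k$ to ${}_{p+2}\Psi_{q+2}^k$. The paper leaves these substitutions implicit, and your verification of the prefactor, the surviving parameters, and the hypotheses (with $\beta_0'=0$) supplies precisely the omitted bookkeeping.
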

\begin{corollary}\label{c3.6}
Let $\alpha,\gamma,\rho\in\mathbb{C}$ and $k\in\mathbb{R}^+$ be such that $\operatorname{Re}(\alpha)>0$, $\operatorname{Re}\left(\frac{\rho}{k}\right)>\max\{0,\operatorname{Re}(-\gamma)\}$, and also let $a\in\mathbb{C},\ \mu>0$. If $\Delta>-1$ in (\ref{2.1}), then the right-hand sided Erd\'elyi-Kober fractional integration $K_{\gamma,\alpha}^{-}$ $(=I_{-}^{\alpha,0,\gamma})$ of ${}_{p}\Psi_q^k$ is given for $x>0$ by
\begin{eqnarray*}\label{3.10}
&&\left(K_{\gamma,\alpha}^{-}\left(t^{-\frac{\rho}{k}}{}_{p}\Psi_q^k\Bigg[\left.
\begin{matrix}
    (a_i,\alpha_i)_{1,p}\\ 
    (b_j,\beta_j)_{1,q}
  \end{matrix}
\right|at^{-\frac{\mu}{k}}\Bigg]\right)\right)(x)\nonumber\\
&&=k^{\alpha}x^{-\frac{\rho}{k}}{}_{p+1}\Psi_{q+1}^k\Bigg[\left.
\begin{matrix}
    (a_i,\alpha_i)_{1,p}&(k\gamma+\rho,\mu)\\ 
    (b_j,\beta_j)_{1,q}&(k\alpha+k\gamma+\rho,\mu)
  \end{matrix}
\right|ax^{-\frac{\mu}{k}}\Bigg].
\end{eqnarray*}
\end{corollary}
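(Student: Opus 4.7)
The plan is to derive Corollary \ref{c3.6} as an immediate specialization of Corollary \ref{c3.4}, exploiting the identification $K_{\gamma,\alpha}^{-} = I_{-}^{\alpha,0,\gamma}$ given in the statement. Since Corollary \ref{c3.4} has already been obtained by specializing Theorem \ref{t3.2} through relation (\ref{1.19}), there is no need to redo any integration; the proof is purely algebraic collapse of parameters.

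First, I would substitute $\beta = 0$ directly into Corollary \ref{c3.4}. The hypothesis $\operatorname{Re}(\rho/k) > \max\{\operatorname{Re}(-\beta),\operatorname{Re}(-\gamma)\}$ becomes $\operatorname{Re}(\rho/k) > \max\{0,\operatorname{Re}(-\gamma)\}$, which matches the hypothesis of Corollary \ref{c3.6}, and $\operatorname{Re}(\alpha) > 0$ is unchanged. The left-hand side becomes $\left(I_{-}^{\alpha,0,\gamma}(t^{-\rho/k}\,{}_{p}\Psi_q^k[\cdots])\right)(x) = \left(K_{\gamma,\alpha}^{-}(t^{-\rho/k}\,{}_{p}\Psi_q^k[\cdots])\right)(x)$, as required. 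On the right-hand side, the prefactor $k^{\alpha}x^{-\beta-\rho/k}$ reduces to $k^{\alpha}x^{-\rho/k}$.

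Next, I would examine the resulting ${}_{p+2}\Psi_{q+2}^k$ function. After setting $\beta = 0$, its upper parameter $(k\beta+\rho,\mu)$ becomes $(\rho,\mu)$, which is identical to the lower parameter $(\rho,\mu)$ already present; simultaneously, the lower parameter $(k\alpha+k\beta+k\gamma+\rho,\mu)$ becomes $(k\alpha+k\gamma+\rho,\mu)$. By the series definition (\ref{1.2}), the matching pair produces equal factors $\Gamma_k(\rho+n\mu)$ in the numerator and denominator of each term, which cancel for every $n \in \mathbb{N}_0$. This cancellation reduces ${}_{p+2}\Psi_{q+2}^k$ to ${}_{p+1}\Psi_{q+1}^k$ with precisely the numerator parameter $(k\gamma+\rho,\mu)$ and denominator parameter $(k\alpha+k\gamma+\rho,\mu)$ appended to the original lists, yielding the right-hand side of the corollary.

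The only point requiring minor care is justifying the term-by-term cancellation: one must check that $\Gamma_k(\rho+n\mu) \neq 0$ and is finite for all $n \in \mathbb{N}_0$, which is ensured by the standing hypothesis on the arguments of the generalized $K$-Wright function (that all parameters avoid $k\mathbb{Z}^-$), together with $\mu > 0$. No new analytic step — neither interchange of summation and integration nor any new application of Lemma \ref{l2.1} — is needed, so the argument is a short remark rather than a separate computation.
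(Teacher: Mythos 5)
Your proposal is correct and matches the paper's (implicit) derivation: the paper states Corollary \ref{c3.6} as an immediate consequence of the Saigo case, and setting $\beta=0$ in Corollary \ref{c3.4} with the resulting cancellation of the matching $\Gamma_k(\rho+n\mu)$ factors is exactly the intended argument. The only minor imprecision is that the nonvanishing and finiteness of $\Gamma_k(\rho+n\mu)$ follows from $\operatorname{Re}(\rho/k)>0$ and $\mu>0$ (so $\operatorname{Re}(\rho+n\mu)>0$) rather than from the standing hypothesis on the $(a_i,\alpha_i)$, $(b_j,\beta_j)$, but this does not affect the validity of the proof.
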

\begin{remark}
When $\beta=-\alpha$, Corollaries \ref{c3.1} and \ref{c3.4}, respectively yield left- and right-hand sided Riemann-Liouville fractional integration of ${}_{p}\Psi_q^k$ (see Gehlot and Prajapati \cite{Gehlot283}). Further, the case $k=1$, reduces to the results for generalized Wright function (see Kilbas \cite{Kilbas2004113}).
\end{remark} 

\section{MSM fractional differentiation of ${}_{p}\Psi_q^k$}
\setcounter{equation}{0}
To obtain the Marichev-Saigo-Maeda fractional differentiation of the generalized $K$-Wright function, we first prove the following lemma.
\begin{lemma}\label{ll4.1}
Let $\alpha,\alpha',\beta,\beta',\gamma,\rho\in\mathbb{C}$.
\begin{itemize}
\item[(a)] If $\operatorname{Re}(\rho)>$ $\max\{0,,\operatorname{Re}(-\alpha+\beta),\operatorname{Re}(-\alpha-\alpha'-\beta'+\gamma)\}$, then
\begin{equation}\label{l4.1}
\left(D_{0+}^{\alpha,\alpha',\beta,\beta',\gamma}t^{\rho-1}\right)(x)=\frac{\Gamma(\rho)\Gamma(-\beta+\alpha+\rho)\Gamma(\alpha+\alpha'+\beta'-\gamma+\rho)}{\Gamma(-\beta+\rho)\Gamma(\alpha+\alpha'-\gamma+\rho)\Gamma(\alpha+\beta'-\gamma+\rho)}x^{\alpha+\alpha'-\gamma+\rho-1}.
\end{equation}
\item[(b)] If $\operatorname{Re}(\rho)>$ $\max\{\operatorname{Re}(-\beta'),\operatorname{Re}(\alpha'+\beta-\gamma),\operatorname{Re}(\alpha+\alpha'-\gamma)+[\operatorname{Re}(\gamma)]+1\}$, then
\begin{equation}\label{l4.2}
\left(D_{-}^{\alpha,\alpha',\beta,\beta',\gamma}t^{-\rho}\right)(x)=\frac{\Gamma\left(\beta'+\rho\right)\Gamma\left(-\alpha-\alpha'+\gamma+\rho\right)\Gamma\left(-\alpha'-\beta+\gamma+\rho\right)}{\Gamma\left(\rho\right)\Gamma\left(-\alpha'+\beta'+\rho\right)\Gamma\left(-\alpha-\alpha'-\beta+\gamma+\rho\right)}x^{\alpha+\alpha'-\gamma-\rho}.
\end{equation}
\end{itemize} 
\end{lemma}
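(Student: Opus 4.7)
The plan is to reduce both statements to the integral identities of Lemma \ref{l2.1} by invoking the definitions (\ref{1.15}) and (\ref{1.16}) of the MSM fractional differential operators, and then to differentiate a pure power of $x$. Throughout, set $n:=[\operatorname{Re}(\gamma)]+1$ so that $\operatorname{Re}(-\gamma+n)>0$, which is exactly the hypothesis $\operatorname{Re}(\tilde{\gamma})>0$ needed to apply Lemma \ref{l2.1} to the inner integral operators.

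For part (a), I would apply (\ref{1.15}) to write $D_{0+}^{\alpha,\alpha',\beta,\beta',\gamma}t^{\rho-1} = (d/dx)^{n}\bigl(I_{0+}^{-\alpha',-\alpha,-\beta'+n,-\beta,-\gamma+n}t^{\rho-1}\bigr)(x)$, and then invoke Lemma \ref{l2.1}(a) with the shifted parameters
\begin{equation*}
(\tilde{\alpha},\tilde{\alpha}',\tilde{\beta},\tilde{\beta}',\tilde{\gamma})=(-\alpha',-\alpha,-\beta'+n,-\beta,-\gamma+n).
\end{equation*}
A direct substitution shows that the three "max"-conditions of Lemma \ref{l2.1}(a) become precisely $\operatorname{Re}(\rho)>\max\{0,\operatorname{Re}(-\alpha+\beta),\operatorname{Re}(-\alpha-\alpha'-\beta'+\gamma)\}$, i.e. the hypothesis of (a). The integral output is a single power $x^{\alpha+\alpha'-\gamma+n+\rho-1}$ multiplied by a ratio of six Gamma functions. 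I then differentiate $n$ times using $\tfrac{d^{n}}{dx^{n}}x^{c}=\tfrac{\Gamma(c+1)}{\Gamma(c-n+1)}x^{c-n}$; the Gamma factor $\Gamma(\alpha+\alpha'-\gamma+n+\rho)$ that appears in the numerator cancels exactly the denominator factor $\Gamma(-\tilde{\alpha}-\tilde{\alpha}'+\tilde{\gamma}+\rho)=\Gamma(\alpha+\alpha'-\gamma+n+\rho)$ produced by Lemma \ref{l2.1}(a), leaving precisely the right-hand side of (\ref{l4.1}).

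For part (b) the structure is identical, starting from (\ref{1.16}) with the shifted parameters $(-\alpha',-\alpha,-\beta',-\beta+n,-\gamma+n)$ fed into Lemma \ref{l2.1}(b). The three "max"-conditions transform to $\operatorname{Re}(\rho)>\operatorname{Re}(-\beta')$, $\operatorname{Re}(\rho)>\operatorname{Re}(\alpha'+\beta-\gamma)$ and $\operatorname{Re}(\rho)>\operatorname{Re}(\alpha+\alpha'-\gamma)+n$, matching the hypothesis of (b). The integral output is now $x^{\alpha+\alpha'-\gamma+n-\rho}$ times a ratio of Gamma functions. The only subtle point is handling the operator $(-d/dx)^{n}$; using the reflection
\begin{equation*}
\Bigl(-\tfrac{d}{dx}\Bigr)^{n}x^{c}=\frac{\Gamma(n-c)}{\Gamma(-c)}x^{c-n}
\end{equation*}
absorbs the sign $(-1)^{n}$ into a Gamma-reflected Pochhammer symbol. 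With $c=\alpha+\alpha'-\gamma+n-\rho$, this contributes $\Gamma(-\alpha-\alpha'+\gamma+\rho)$ in the numerator and cancels the factor $\Gamma(-\alpha-\alpha'+\gamma+\rho-n)$ left over from the integral step, producing (\ref{l4.2}).

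The main technical nuisance, and essentially the only place a sign or index error is likely to creep in, is the bookkeeping of the five parameters through the two different substitutions inside Lemma \ref{l2.1}, together with the sign-handling in the right-sided case where $-d/dx$ is raised to the $n$-th power. No convergence issue arises because we are acting on the single monomial $t^{\rho-1}$ (resp.\ $t^{-\rho}$), so no interchange of differentiation and summation is required.
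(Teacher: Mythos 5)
Your proof is correct and follows essentially the same route as the paper: rewrite the derivatives via the definitions \eqref{1.15} and \eqref{1.16}, apply Lemma~\ref{l2.1} with the shifted parameters $(-\alpha',-\alpha,-\beta'+m,-\beta,-\gamma+m)$ resp.\ $(-\alpha',-\alpha,-\beta',-\beta+m,-\gamma+m)$ where $m=[\operatorname{Re}(\gamma)]+1$, and then differentiate the resulting power so that a Gamma factor cancels. The only cosmetic difference is in part (b), where you absorb the sign $(-1)^m$ through the Pochhammer-type identity $\left(-\tfrac{d}{dx}\right)^{m}x^{c}=\tfrac{\Gamma(m-c)}{\Gamma(-c)}x^{c-m}$, whereas the paper reaches the same cancellation by applying the reflection formula for the Gamma function twice; the two computations are equivalent.
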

\begin{proof}
For convenience, let $m=[\operatorname{Re}(\gamma)]+1$. 
\begin{itemize}
\item[(a)] Using (\ref{1.15}) and (\ref{2.4}), lhs of (\ref{l4.1}) equals,
\begin{eqnarray*}
&&\left(\frac{d}{\,dx}\right)^m\left(I_{0+}^{-\alpha',-\alpha,-\beta'+m,-\beta,-\gamma+m}t^{\rho-1}\right)(x)\\
&=&\frac{d^m}{\,dx^m}\frac{\Gamma\left(\rho\right)\Gamma\left(\alpha-\beta+\rho\right)\Gamma\left(\alpha+\alpha'+\beta'-\gamma+\rho\right)}{\Gamma\left(-\beta+\rho\right)\Gamma\left(\alpha+\alpha'-\gamma+\rho+m\right)\Gamma\left(\alpha+\beta'-\gamma+\rho\right)}x^{\alpha+\alpha'-\gamma+\rho+m-1}\\
&=&\frac{\Gamma\left(\rho\right)\Gamma\left(\alpha-\beta+\rho\right)\Gamma\left(\alpha+\alpha'+\beta'-\gamma+\rho\right)}{\Gamma\left(-\beta+\rho\right)\Gamma\left(\alpha+\alpha'-\gamma+\rho+m\right)\Gamma\left(\alpha+\beta'-\gamma+\rho\right)}\frac{d^m}{\,dx^m}x^{\alpha+\alpha'-\gamma+\rho+m-1},
\end{eqnarray*}
which on differentiation yields (\ref{l4.1}).
\item[(b)] Using (\ref{1.16}) and (\ref{2.5}), lhs of (\ref{l4.2}) reduces to
\begin{eqnarray}\label{l4.7}
&&\left(-\frac{d}{\,dx}\right)^m\left(I_{-}^{-\alpha',-\alpha,-\beta',-\beta+m,-\gamma+m}t^{-\rho}\right)(x)\nonumber\\
&=&(-1)^m\frac{\Gamma\left(\beta'+\rho\right)\Gamma\left(-\alpha-\alpha'+\gamma+\rho-m\right)\Gamma\left(-\alpha'-\beta+\gamma+\rho\right)}{\Gamma\left(\rho\right)\Gamma\left(-\alpha'+\beta'+\rho\right)\Gamma\left(-\alpha-\alpha'-\beta+\gamma+\rho\right)}\frac{d^m}{\,dx^m}x^{\alpha+\alpha'-\gamma-\rho+m}.\nonumber\\
\end{eqnarray}
Now
\begin{equation}\label{l4.8}
\frac{d^m}{\,dx^m}x^{\alpha+\alpha'-\gamma-\rho+m}=\frac{\Gamma\left(\alpha+\alpha'-\gamma-\rho+m+1\right)}{\Gamma\left(\alpha+\alpha'-\gamma-\rho+1\right)}x^{\alpha+\alpha'-\gamma-\rho}.
\end{equation}
Also by using reflection formula for the gamma function  \cite{Erdelyi1953}, we obtain
\begin{eqnarray}\label{l4.9}
\Gamma\left(-\alpha-\alpha'+\gamma+\rho-m\right)&\times&\Gamma\left(1-\left(-\alpha-\alpha'+\gamma+\rho-m\right)\right)\nonumber\\
&=&\frac{\pi}{\sin{\left(-\alpha-\alpha'+\gamma+\rho-m\right)\pi}}\nonumber\\
&=&\frac{\pi}{(-1)^m\sin{\left(-\alpha-\alpha'+\gamma+\rho\right)\pi}}.
\end{eqnarray}
Similarly,
\begin{equation}\label{l4.10}
\Gamma\left(-\alpha-\alpha'+\gamma+\rho\right)\Gamma\left(1-\left(-\alpha-\alpha'+\gamma+\rho\right)\right)=\frac{\pi}{\sin{\left(-\alpha-\alpha'+\gamma+\rho\right)\pi}}.
\end{equation}
\end{itemize}
On substituting (\ref{l4.8})-(\ref{l4.10}) in rhs of (\ref{l4.7}), lemma is proved.
\end{proof}
\begin{remark}\label{r4.1}
In view of (\ref{1.15}) and (\ref{1.16}), we have the following correspondence between Lemma \ref{l2.1} and Lemma \ref{ll4.1}:
\begin{itemize}
\item[(a)] If in the hypothesis of Lemma \ref{l2.1}(a), we make the changes $$\alpha\rightarrow-\alpha',\ \alpha'\rightarrow-\alpha,\ \beta\rightarrow-\beta'+[\operatorname{Re}(\gamma)]+1,\ \beta'\rightarrow-\beta,\ \gamma\rightarrow-\gamma+[\operatorname{Re}(\gamma)]+1$$ and in the rhs of assertion $\alpha\rightarrow-\alpha'$, $\alpha'\rightarrow-\alpha$, $\beta\rightarrow-\beta'$, $\beta'\rightarrow-\beta$, $\gamma\rightarrow-\gamma$ respectively, then Lemma \ref{ll4.1}(a) follows. Similarly, 
\item[(b)] If in the hypothesis of Lemma \ref{l2.1}(b), we make the changes $$\alpha\rightarrow-\alpha',\ \alpha'\rightarrow-\alpha,\ \beta\rightarrow-\beta',\ \beta'\rightarrow-\beta+[\operatorname{Re}(\gamma)]+1,\ \gamma\rightarrow-\gamma+[\operatorname{Re}(\gamma)]+1$$ and in the rhs of assertion $\alpha\rightarrow-\alpha'$, $\alpha'\rightarrow-\alpha$, $\beta\rightarrow-\beta'$, $\beta'\rightarrow-\beta$, $\gamma\rightarrow-\gamma$ respectively, then Lemma \ref{ll4.1}(b) is obtained.
\end{itemize}
\end{remark}
\noindent Next theorem give the image of ${}_{p}\Psi_q^k$ under left-hand sided MSM fractional derivative.
\begin{theorem}\label{t4.1}
Let $\alpha,\alpha',\beta,\beta',\gamma,\rho\in\mathbb{C}$ and $k\in\mathbb{R}^+$ be such that $\operatorname{Re}\left(\frac{\rho}{k}\right)>\max\{0,$ $\operatorname{Re}(-\alpha+\beta),\operatorname{Re}(-\alpha-\alpha'-\beta'+\gamma)\}$. Also, let $a\in\mathbb{C}$ and $\mu>0$. If $\Delta>-1$ in (\ref{2.1}), then for $x>0$
\begin{eqnarray}\label{4.1}
&&\left(D_{0+}^{\alpha,\alpha',\beta,\beta',\gamma}\left(t^{\frac{\rho}{k}-1}{}_{p}\Psi_q^k\Bigg[\left.
\begin{matrix}
    (a_i,\alpha_i)_{1,p}\\ 
    (b_j,\beta_j)_{1,q}
  \end{matrix}
\right|at^{\frac{\mu}{k}}\Bigg]\right)\right)(x)\nonumber\\
&=&k^{-\gamma}x^{\alpha+\alpha'-\gamma+\frac{\rho}{k}-1}{}_{p+3}\Psi_{q+3}^k\Bigg[
\begin{matrix}
    (a_i,\alpha_i)_{1,p}&(\rho,\mu)\\ 
    (b_j,\beta_j)_{1,q}&(-k\beta+\rho,\mu)
  \end{matrix}\nonumber\\
& &\left.
\begin{matrix}
   (k\alpha-k\beta+\rho,\mu)&(k\alpha+k\alpha'+k\beta'-k\gamma+\rho,\mu)\\ 
   (k\alpha+k\alpha'-k\gamma+\rho,\mu)&(k\alpha+k\beta'-k\gamma+\rho,\mu)
  \end{matrix}
\right|ax^{\frac{\mu}{k}}\Bigg].
\end{eqnarray}
\end{theorem}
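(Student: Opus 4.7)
The plan is to mirror the proof of Theorem \ref{t3.1}, with the differential operator $D_{0+}^{\alpha,\alpha',\beta,\beta',\gamma}$ playing the role that $I_{0+}^{\alpha,\alpha',\beta,\beta',\gamma}$ played there, and with Lemma \ref{ll4.1}(a) replacing Lemma \ref{l2.1}(a). First I would substitute the series (\ref{1.2}) for ${}_{p}\Psi_q^k(at^{\mu/k})$ into the left-hand side of (\ref{4.1}), so that the argument of $D_{0+}^{\alpha,\alpha',\beta,\beta',\gamma}$ becomes $\sum_{n\ge 0}c_n\,t^{\rho/k + n\mu/k - 1}$ with $c_n=a^n\prod_i\Gamma_k(a_i+n\alpha_i)/\bigl(n!\prod_j\Gamma_k(b_j+n\beta_j)\bigr)$. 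Under the hypothesis $\Delta>-1$, Theorem \ref{t2.1}(a) guarantees that this power series converges absolutely and uniformly on compact subsets of $(0,\infty)$, which in turn permits the desired interchange.

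Next I would bring the operator inside the sum and apply Lemma \ref{ll4.1}(a) with its parameter $\rho$ replaced by $\rho/k + n\mu/k$. The hypothesis $\Re(\rho/k)>\max\{0,\Re(-\alpha+\beta),\Re(-\alpha-\alpha'-\beta'+\gamma)\}$ is set up precisely so that the lemma applies for every $n\in\mathbb{N}_0$, since $\mu>0$ only enlarges the real part. This yields a ratio of three ordinary Gamma functions in the numerator and three in the denominator times a power $x^{\alpha+\alpha'-\gamma+\rho/k+n\mu/k-1}$. I would then use the identity (\ref{1.6}), $\Gamma_k(z)=k^{z/k-1}\Gamma(z/k)$, to convert each $\Gamma(\cdot)$ into a $\Gamma_k(\cdot)$ at the rescaled argument. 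The powers of $k$ contributed by the six factors combine with the common $x$-exponent shift, giving a net prefactor $k^{-\gamma}\,x^{\alpha+\alpha'-\gamma+\rho/k-1}$ in agreement with the sign flip in $\gamma$ expected when passing from integration to differentiation (compare the $k^{\gamma}$ of Theorem \ref{t3.1}). Reassembling the resulting series via the definition (\ref{1.2}) produces the ${}_{p+3}\Psi^k_{q+3}$ on the right-hand side of (\ref{4.1}), with the three new numerator pairs $(\rho,\mu)$, $(k\alpha-k\beta+\rho,\mu)$, $(k\alpha+k\alpha'+k\beta'-k\gamma+\rho,\mu)$ and the three new denominator pairs $(-k\beta+\rho,\mu)$, $(k\alpha+k\alpha'-k\gamma+\rho,\mu)$, $(k\alpha+k\beta'-k\gamma+\rho,\mu)$.

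The step I expect to be the only nontrivial one is the interchange of $D_{0+}^{\alpha,\alpha',\beta,\beta',\gamma}$ with the infinite sum, because this operator is defined (see (\ref{1.15})) as the composition of $m=[\Re(\gamma)]+1$ ordinary derivatives with a fractional integral. I would handle this in two stages: first apply $I_{0+}^{-\alpha',-\alpha,-\beta'+m,-\beta,-\gamma+m}$ termwise, exactly as justified in the proof of Theorem \ref{t3.1}, producing a series of power functions $x^{\alpha+\alpha'-\gamma+\rho/k+n\mu/k+m-1}$ whose coefficients decay fast enough (by Theorem \ref{t2.1}(a)) to guarantee absolute and uniform convergence on compact subsets of $(0,\infty)$; then differentiate $m$ times term by term, which is legitimate by the standard theorem on termwise differentiation of uniformly convergent power series. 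Once this justification is in place, everything else is routine bookkeeping, and the formal manipulation parallels that of Theorem \ref{t3.1} so closely that the proof can be presented in essentially the same compact style.
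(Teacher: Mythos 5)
Your proposal is correct and follows essentially the same route as the paper's own proof: substitute the series (\ref{1.2}), interchange the operator with the summation, apply Lemma \ref{ll4.1}(a) termwise with $\rho$ replaced by $\rho/k+n\mu/k$, and use (\ref{1.6}) to recast the six Gamma factors as $\Gamma_k$'s, which yields the $k^{-\gamma}$ prefactor and reassembles into ${}_{p+3}\Psi^k_{q+3}$. Your two-stage justification of the interchange (termwise fractional integration followed by termwise $m$-fold differentiation) is in fact more careful than the paper's one-line appeal to interchanging differentiation and summation.
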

\begin{proof}
After interchanging differentiation and summation, lhs of (\ref{4.1}), using (\ref{1.2}), becomes
\begin{eqnarray*}\label{4.2}
&&\left(D_{0+}^{\alpha,\alpha',\beta,\beta',\gamma}\left(t^{\frac{\rho}{k}-1}\sum_{n=0}^{\infty}\frac{\prod_{i=1}^{p}\Gamma_k(a_i+n\alpha_i)}{\prod_{j=1}^{q}\Gamma_k(b_j+n\beta_j)}\frac{(at^{\frac{\mu}{k}})^n}{n!}\right)\right)(x)\\
&=&\sum_{n=0}^{\infty}\frac{\prod_{i=1}^{p}\Gamma_k(a_i+n\alpha_i)}{\prod_{j=1}^{q}\Gamma_k(b_j+n\beta_j)}\frac{a^n}{n!}\left(D_{0+}^{\alpha,\alpha',\beta,\beta',\gamma}t^{\frac{\rho}{k}+\frac{n\mu}{k}-1}\right)(x)\\
&=&\sum_{n=0}^{\infty}\frac{\prod_{i=1}^{p}\Gamma_k(a_i+n\alpha_i)}{\prod_{j=1}^{q}\Gamma_k(b_j+n\beta_j)}\frac{a^n}{n!}\frac{\Gamma\left(\frac{\rho}{k}+\frac{n\mu}{k}\right)}{\Gamma\left(-\beta+\frac{\rho}{k}+\frac{n\mu}{k}\right)}\\
&&\times\frac{\Gamma\left(\alpha-\beta+\frac{\rho}{k}+\frac{n\mu}{k}\right)\Gamma\left(\alpha+\alpha'+\beta'-\gamma+\frac{\rho}{k}+\frac{n\mu}{k}\right)}{\Gamma\left(\alpha+\alpha'-\gamma+\frac{\rho}{k}+\frac{n\mu}{k}\right)\Gamma\left(\alpha+\beta'-\gamma+\frac{\rho}{k}+\frac{n\mu}{k}\right)}x^{\alpha+\alpha'-\gamma+\frac{\rho}{k}+\frac{n\mu}{k}-1}.\\
&=&k^{-\gamma}x^{\alpha+\alpha'-\gamma+\frac{\rho}{k}-1}\sum_{n=0}^{\infty}\frac{\prod_{i=1}^{p}\Gamma_k(a_i+n\alpha_i)}{\prod_{j=1}^{q}\Gamma_k(b_j+n\beta_j)}\frac{\Gamma_k\left(\rho+n\mu\right)}{\Gamma_k\left(-k\beta+\rho+n\mu\right)}\\
&&\times\frac{\Gamma_k\left(k\alpha-k\beta+\rho+n\mu\right)\Gamma_k\left(k\alpha+k\alpha'+k\beta'-k\gamma+\rho+n\mu\right)}{\Gamma_k\left(k\alpha+k\alpha'-k\gamma+\rho+n\mu\right)\Gamma_k\left(k\alpha+k\beta'-k\gamma+\rho+n\mu\right)}\frac{(ax^{\frac{\mu}{k}})^n}{n!},
\end{eqnarray*}
where we have used (\ref{l4.1}) and (\ref{1.6}). Finally by using (\ref{1.2}), the result follows.
\end{proof}
\noindent Following corollaries follow immediately.
\begin{corollary}\label{c4.1}
Let $\alpha,\beta,\gamma,\rho\in\mathbb{C}$ and $k\in\mathbb{R}^+$ be such that $\operatorname{Re}\left(\frac{\rho}{k}\right)>\max\{0$, $\operatorname{Re}(-\alpha-\beta-\gamma)\}$, and also let $a\in\mathbb{C},\ \mu>0$. If $\Delta>-1$ in (\ref{2.1}), then the left-hand sided generalized fractional differentiation $D_{0+}^{\alpha,\beta,\gamma}$ of ${}_{p}\Psi_q^k$ is given for $x>0$ by
\begin{eqnarray*}\label{4.3}
&&\left(D_{0+}^{\alpha,\beta,\gamma}\left(t^{\frac{\rho}{k}-1}{}_{p}\Psi_q^k\Bigg[\left.
\begin{matrix}
    (a_i,\alpha_i)_{1,p}\\ 
    (b_j,\beta_j)_{1,q}
  \end{matrix}
\right|at^{\frac{\mu}{k}}\Bigg]\right)\right)(x)\nonumber\\
&&=k^{-\alpha}x^{\beta+\frac{\rho}{k}-1}{}_{p+2}\Psi_{q+2}^k\Bigg[\left.
\begin{matrix}
    (a_i,\alpha_i)_{1,p}&(\rho,\mu)&(k\alpha+k\beta+k\gamma+\rho,\mu)\\ 
    (b_j,\beta_j)_{1,q}&(k\beta+\rho,\mu)&(k\gamma+\rho,\mu)
  \end{matrix}
\right|ax^{\frac{\mu}{k}}\Bigg].
\end{eqnarray*}
\end{corollary}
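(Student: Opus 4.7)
The plan is to derive Corollary~\ref{c4.1} directly from Theorem~\ref{t4.1} through the reduction formula~(\ref{1.20}), which expresses the Saigo operator as a specialization of the MSM operator. To avoid a collision of symbols between the two parameter sets, I would provisionally rename the Saigo parameters of Corollary~\ref{c4.1} to $(A,B,C)$; formula~(\ref{1.20}) then identifies $D_{0+}^{A,B,C}$ with $D_{0+}^{0,A+B,\beta,A+C,A}$, where the fourth MSM slot $\beta\in\mathbb{C}$ is a free parameter, since the Saigo operator on the right-hand side of~(\ref{1.20}) does not depend on it. The strategy is thus to substitute $(\alpha,\alpha',\beta,\beta',\gamma)=(0,A+B,\beta,A+C,A)$ into the conclusion~(\ref{4.1}) of Theorem~\ref{t4.1} and simplify.

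Carrying out the substitution, the prefactor $k^{-\gamma}x^{\alpha+\alpha'-\gamma+\rho/k-1}$ collapses to $k^{-A}x^{B+\rho/k-1}$, matching the corollary. The key simplification is that, among the seven parameter pairs of the resulting ${}_{p+3}\Psi^k_{q+3}$, the denominator pair $(-k\beta+\rho,\mu)$ and the numerator pair $(k\alpha-k\beta+\rho,\mu)$ coincide (because $\alpha=0$), so they cancel. This lowers the order to ${}_{p+2}\Psi^k_{q+2}$ and removes the dependence on the free parameter $\beta$, as it must, since the left-hand side of~(\ref{1.20}) is $\beta$-independent. The remaining four pairs evaluate to $(\rho,\mu)$ and $(kA+kB+kC+\rho,\mu)$ in the numerator and $(kB+\rho,\mu)$ and $(kC+\rho,\mu)$ in the denominator, which after restoring $(A,B,C)\mapsto(\alpha,\beta,\gamma)$ are exactly the parameters displayed in Corollary~\ref{c4.1}.

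Finally, I would verify that the hypotheses transfer correctly. The condition $\operatorname{Re}(\rho/k)>\max\{0,\operatorname{Re}(-\alpha+\beta),\operatorname{Re}(-\alpha-\alpha'-\beta'+\gamma)\}$ of Theorem~\ref{t4.1} becomes, under the above substitution (say with the free $\beta$ taken as $0$), $\operatorname{Re}(\rho/k)>\max\{0,\operatorname{Re}(-A-B-C)\}$, which is precisely the hypothesis of Corollary~\ref{c4.1}. No genuine analytic obstacle is anticipated; the main bookkeeping task is tracking the renaming induced by~(\ref{1.20}) and confirming the cancellation of the $\beta$-dependent pair. This is consistent with the paper's comment that the corollary ``follows immediately'' from Theorem~\ref{t4.1}.
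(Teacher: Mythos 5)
Your proposal is correct and follows exactly the route the paper intends: specialize Theorem \ref{t4.1} via the reduction (\ref{1.20}) with $(\alpha,\alpha',\beta,\beta',\gamma)=(0,\alpha+\beta,\cdot,\alpha+\gamma,\alpha)$, observe that the pair $(-k\beta+\rho,\mu)$ appears in both the numerator and denominator lists and cancels (removing the dependence on the free fourth slot and lowering the order to ${}_{p+2}\Psi^k_{q+2}$), and check that the hypotheses reduce to those of Corollary \ref{c4.1}. The paper offers no further argument beyond ``follows immediately,'' so your bookkeeping is precisely the omitted proof.
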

\begin{corollary}
Let $\alpha,\gamma,\rho\in\mathbb{C}$ and $k\in\mathbb{R}^+$ be such that $\operatorname{Re}\left(\frac{\rho}{k}\right)>\max\{0,\operatorname{Re}(-\alpha-\gamma)\}$, and also let $a\in\mathbb{C},\ \mu>0$. If $\Delta>-1$ in (\ref{2.1}), then the left-hand sided Erd\'elyi-Kober fractional differentiation $D_{\gamma,\alpha}^{+}$ $(=D_{0+}^{\alpha,0,\gamma})$ of ${}_{p}\Psi_q^k$ is given for $x>0$ by
\begin{eqnarray*}\label{4.5}
&&\left(D_{\gamma,\alpha}^{+}\left(t^{\frac{\rho}{k}-1}{}_{p}\Psi_q^k\Bigg[\left.
\begin{matrix}
    (a_i,\alpha_i)_{1,p}\\ 
    (b_j,\beta_j)_{1,q}
  \end{matrix}
\right|at^{\frac{\mu}{k}}\Bigg]\right)\right)(x)\nonumber\\
&&=k^{-\alpha}x^{\frac{\rho}{k}-1}{}_{p+1}\Psi_{q+1}^k\Bigg[\left.
\begin{matrix}
    (a_i,\alpha_i)_{1,p}&(k\alpha+k\gamma+\rho,\mu)\\ 
    (b_j,\beta_j)_{1,q}&(k\gamma+\rho,\mu)
  \end{matrix}
\right|ax^{\frac{\mu}{k}}\Bigg].
\end{eqnarray*}
\end{corollary}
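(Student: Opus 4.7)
The plan is to obtain this corollary as an immediate specialization of the preceding Corollary~\ref{c4.1}, since by definition $D_{\gamma,\alpha}^{+}=D_{0+}^{\alpha,0,\gamma}$, i.e.\ the Erd\'elyi--Kober derivative is the Saigo derivative with $\beta=0$. So the entire argument will consist of setting $\beta=0$ in the hypothesis and conclusion of Corollary~\ref{c4.1} and simplifying.

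First I would check the hypothesis. Substituting $\beta=0$ into the Saigo condition $\operatorname{Re}(\rho/k)>\max\{0,\operatorname{Re}(-\alpha-\beta-\gamma)\}$ gives $\operatorname{Re}(\rho/k)>\max\{0,\operatorname{Re}(-\alpha-\gamma)\}$, which matches the stated assumption, while the remaining assumptions ($k\in\mathbb{R}^+$, $a\in\mathbb{C}$, $\mu>0$, $\Delta>-1$) transfer verbatim.

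Next I would substitute $\beta=0$ into the right-hand side of Corollary~\ref{c4.1}. The prefactor $k^{-\alpha}x^{\beta+\rho/k-1}$ reduces to $k^{-\alpha}x^{\rho/k-1}$. The two extra numerator parameters become $(\rho,\mu)$ and $(k\alpha+k\gamma+\rho,\mu)$, while the two extra denominator parameters become $(\rho,\mu)$ and $(k\gamma+\rho,\mu)$. The pair $(\rho,\mu)$ now appears identically in both the top and bottom rows, so the corresponding factor $\Gamma_k(\rho+n\mu)$ in the defining series (\ref{1.2}) occurs in the numerator and denominator of every term and cancels termwise. This collapses the ${}_{p+2}\Psi_{q+2}^k$ to ${}_{p+1}\Psi_{q+1}^k$ with the single remaining extra top parameter $(k\alpha+k\gamma+\rho,\mu)$ and the single remaining extra bottom parameter $(k\gamma+\rho,\mu)$, exactly matching the claim.

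There is no real obstacle; the only point requiring a brief justification is the legitimacy of the termwise cancellation of the common $\Gamma_k$ factor, which is immediate because Theorem~\ref{t2.1}(a) guarantees absolute convergence of the resulting series for every $z\in\mathbb{C}$ under the standing assumption $\Delta>-1$. As a redundant sanity check I could instead prove the result from scratch in parallel with Theorem~\ref{t4.1}: apply Lemma~\ref{ll4.1}(a) with $\alpha'=\beta=\beta'=0$ to each power $t^{\rho/k+n\mu/k-1}$ in the series expansion of ${}_p\Psi_q^k(at^{\mu/k})$, interchange differentiation and summation (justified by uniform convergence), and convert the $\Gamma$ factors to $\Gamma_k$ via (\ref{1.6}); this would reproduce the same cancellation of $\Gamma(\rho/k+n\mu/k)$ more explicitly but is longer than the direct specialization.
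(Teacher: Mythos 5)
Your main argument is correct and is exactly what the paper intends: the corollary is obtained by setting $\beta=0$ in Corollary \ref{c4.1} (the paper offers no separate proof, stating only that the corollaries ``follow immediately''), and the cancellation of the common parameter pair $(\rho,\mu)$ between the top and bottom rows is the right mechanism for collapsing ${}_{p+2}\Psi_{q+2}^k$ to ${}_{p+1}\Psi_{q+1}^k$; the hypothesis also specializes correctly. One small slip in your redundant aside: the Erd\'elyi--Kober derivative $D^{+}_{\gamma,\alpha}=D_{0+}^{\alpha,0,\gamma}$ corresponds via (\ref{1.20}) to the MSM derivative $D_{0+}^{0,\alpha,\beta,\alpha+\gamma,\alpha}$ (with $\beta$ free), not to Lemma \ref{ll4.1}(a) with $\alpha'=\beta=\beta'=0$ --- that specialization would yield $\Gamma(\alpha+\rho)/\Gamma(\alpha-\gamma+\rho)$ instead of the required $\Gamma(\alpha+\gamma+\rho)/\Gamma(\gamma+\rho)$ --- but since you present that route only as a backup, it does not affect the validity of your proof.
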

\noindent The next theorem yields the right-hand sided MSM fractional derivative of ${}_{p}\Psi_q^k$.
\begin{theorem}\label{t4.2}
Let $\alpha,\alpha',\beta,\beta',\gamma,\rho\in\mathbb{C}$ and $k\in\mathbb{R}^+$ be such that $\operatorname{Re}\left(\frac{\rho}{k}\right)>\max\{\operatorname{Re}(-\beta'),$ $\operatorname{Re}(\alpha'+\beta-\gamma),\operatorname{Re}(\alpha+\alpha'-\gamma)+[\operatorname{Re}(\gamma)]+1,\}$. Also, let $a\in\mathbb{C}$ and $\mu>0$. If $\Delta>-1$ in (\ref{2.1}), then 
\begin{eqnarray}\label{4.6}
&&\left(D_{-}^{\alpha,\alpha',\beta,\beta',\gamma}\left(t^{-\frac{\rho}{k}}{}_{p}\Psi_q^k\Bigg[\left.
\begin{matrix}
    (a_i,\alpha_i)_{1,p}\\ 
    (b_j,\beta_j)_{1,q}
  \end{matrix}
\right|at^{-\frac{\mu}{k}}\Bigg]\right)\right)(x)\nonumber\\
&=&k^{-\gamma}x^{\alpha+\alpha'-\gamma-\frac{\rho}{k}}{}_{p+3}\Psi_{q+3}^k\Bigg[
\begin{matrix}
    (a_i,\alpha_i)_{1,p}&(k\beta'+\rho,\mu)\\ 
    (b_j,\beta_j)_{1,q}&(\rho,\mu)
  \end{matrix}\nonumber\\
& &\left.
\begin{matrix}
   (-k\alpha-k\alpha'+k\gamma+\rho,\mu)&(-k\alpha'-k\beta+k\gamma+\rho,\mu)\\ 
   (-k\alpha'+k\beta'+\rho,\mu)&(-k\alpha-k\alpha'-k\beta+k\gamma+\rho,\mu)
  \end{matrix}
\right|ax^{-\frac{\mu}{k}}\Bigg],
\end{eqnarray}
for $x>0$.
\end{theorem}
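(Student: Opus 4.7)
The plan is to mirror the proof of Theorem \ref{t4.1}, now combining the right-hand sided operator $D_-$ with the series expansion of ${}_p\Psi_q^k$ of negative argument. First I would substitute the defining series (\ref{1.2}) for ${}_p\Psi_q^k$ on the LHS of (\ref{4.6}), turning it into
\begin{equation*}
\left(D_{-}^{\alpha,\alpha',\beta,\beta',\gamma}\left(t^{-\frac{\rho}{k}}\sum_{n=0}^{\infty}\frac{\prod_{i=1}^{p}\Gamma_k(a_i+n\alpha_i)}{\prod_{j=1}^{q}\Gamma_k(b_j+n\beta_j)}\frac{(at^{-\frac{\mu}{k}})^n}{n!}\right)\right)(x),
\end{equation*}
and then justify swapping $D_{-}$ with the infinite sum: the hypothesis $\Delta>-1$ guarantees (by Theorem \ref{t2.1}(a)) that ${}_p\Psi_q^k$ is entire, hence the series converges uniformly on compacts, which legitimizes both the term-by-term integration inside $I_{-}$ and the subsequent $m=[\operatorname{Re}(\gamma)]+1$ differentiations that define $D_{-}$.

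Second, for each $n$ I would apply Lemma \ref{ll4.1}(b) with $\rho$ replaced by $\rho/k+n\mu/k$, evaluating $\left(D_{-}^{\alpha,\alpha',\beta,\beta',\gamma}t^{-(\rho/k+n\mu/k)}\right)(x)$. The assumption $\operatorname{Re}(\rho/k)>\max\{\operatorname{Re}(-\beta'),\operatorname{Re}(\alpha'+\beta-\gamma),\operatorname{Re}(\alpha+\alpha'-\gamma)+[\operatorname{Re}(\gamma)]+1\}$ together with $\mu>0$ guarantees that the hypothesis of Lemma \ref{ll4.1}(b) is satisfied for every $n\in\mathbb{N}_0$ (the case $n=0$ is the tightest one and coincides exactly with the stated condition). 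This yields a quotient of three gamma functions in the numerator over three in the denominator, multiplied by $x^{\alpha+\alpha'-\gamma-\rho/k-n\mu/k}$.

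Third, I would factor $x^{\alpha+\alpha'-\gamma-\rho/k}$ and $(ax^{-\mu/k})^n/n!$ out of each summand, and convert every ordinary $\Gamma$-factor whose argument has the form $\sigma+(\rho+n\mu)/k$ into a $k$-gamma via the identity (\ref{1.6}), i.e.\ $\Gamma\!\left(\sigma+\tfrac{\rho+n\mu}{k}\right)=k^{1-\sigma-(\rho+n\mu)/k}\,\Gamma_k(k\sigma+\rho+n\mu)$. The six such substitutions produce a scalar $k$-power whose $n$-dependent exponents cancel across numerator and denominator, leaving exactly $k^{-\gamma}$, in agreement with the prefactor on the RHS of (\ref{4.6}); this bookkeeping is the only subtle step, but it parallels the analogous cancellation in the proof of Theorem \ref{t4.1} and is forced by the homogeneity of the six arguments in $\rho+n\mu$.

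Finally, I would recognize the resulting series as ${}_{p+3}\Psi_{q+3}^{k}[ax^{-\mu/k}]$ via (\ref{1.2}): the three extra numerator parameters $(k\beta'+\rho,\mu)$, $(-k\alpha-k\alpha'+k\gamma+\rho,\mu)$, $(-k\alpha'-k\beta+k\gamma+\rho,\mu)$ come from the three numerator $\Gamma_k$'s, and the three extra denominator parameters $(\rho,\mu)$, $(-k\alpha'+k\beta'+\rho,\mu)$, $(-k\alpha-k\alpha'-k\beta+k\gamma+\rho,\mu)$ come from the three denominator $\Gamma_k$'s, matching (\ref{4.6}) termwise. The main obstacle is purely the step-three bookkeeping: ensuring that the six gamma-to-$k$-gamma conversions give exactly $k^{-\gamma}$ with no $n$-dependence surviving; everything else is a direct transcription of the integral proof.
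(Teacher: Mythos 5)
Your proposal is correct and follows essentially the same route as the paper: expand via (\ref{1.2}), interchange $D_{-}$ with the sum, apply Lemma \ref{ll4.1}(b) termwise with $\rho\mapsto\rho/k+n\mu/k$ (where indeed $n=0$ is the binding case since $\mu>0$), and convert the six gamma factors to $k$-gammas via (\ref{1.6}), with the $n$-dependent exponents cancelling and the constant exponents summing to $-\gamma$. Your explicit verification of the $k^{-\gamma}$ bookkeeping and of the uniform-convergence justification for the interchange is, if anything, slightly more detailed than the paper's own proof.
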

\begin{proof}
Using (\ref{l4.2}) and (\ref{1.6}), lhs of (\ref{4.6}) equals
\begin{eqnarray*}
&&\left(D_{-}^{\alpha,\alpha',\beta,\beta',\gamma}\left(t^{-\frac{\rho}{k}}\sum_{n=0}^{\infty}\frac{\prod_{i=1}^{p}\Gamma_k(a_i+n\alpha_i)}{\prod_{j=1}^{q}\Gamma_k(b_j+n\beta_j)}\frac{(at^{-\frac{\mu}{k}})^n}{n!}\right)\right)(x)\\
&=&\sum_{n=0}^{\infty}\frac{\prod_{i=1}^{p}\Gamma_k(a_i+n\alpha_i)}{\prod_{j=1}^{q}\Gamma_k(b_j+n\beta_j)}\frac{a^n}{n!}\left(D_{-}^{\alpha,\alpha',\beta,\beta',\gamma}t^{-\left(\frac{\rho}{k}+\frac{n\mu}{k}\right)}\right)(x)\\
&=&\sum_{n=0}^{\infty}\frac{\prod_{i=1}^{p}\Gamma_k(a_i+n\alpha_i)}{\prod_{j=1}^{q}\Gamma_k(b_j+n\beta_j)}\frac{a^n}{n!}\frac{\Gamma\left(\beta'+\frac{\rho}{k}+\frac{n\mu}{k}\right)}{\Gamma\left(\frac{\rho}{k}+\frac{n\mu}{k}\right)}\nonumber\\
&&\times\frac{\Gamma\left(-\alpha-\alpha'+\gamma+\frac{\rho}{k}+\frac{n\mu}{k}\right)\Gamma\left(-\alpha'-\beta+\gamma+\frac{\rho}{k}+\frac{n\mu}{k}\right)}{\Gamma\left(-\alpha'+\beta'+\frac{\rho}{k}+\frac{n\mu}{k}\right)\Gamma\left(-\alpha-\alpha'-\beta+\gamma+\frac{\rho}{k}+\frac{n\mu}{k}\right)}x^{\alpha+\alpha'-\gamma-\frac{\rho}{k}-\frac{n\mu}{k}}\\
&=&k^{-\gamma}x^{\alpha+\alpha'-\gamma-\frac{\rho}{k}}\sum_{n=0}^{\infty}\frac{\prod_{i=1}^{p}\Gamma_k(a_i+n\alpha_i)}{\prod_{j=1}^{q}\Gamma_k(b_j+n\beta_j)}\frac{\Gamma_k\left(k\beta'+\rho+n\mu\right)}{\Gamma_k\left(\rho+n\mu\right)}\nonumber\\
&&\times\frac{\Gamma_k\left(-k\alpha-k\alpha'+k\gamma+\rho+n\mu\right)\Gamma_k\left(-k\alpha'-k\beta+k\gamma+\rho+n\mu\right)}{\Gamma_k\left(-k\alpha'+k\beta'+\rho+n\mu\right)\Gamma_k\left(-k\alpha-k\alpha'-k\beta+k\gamma+\rho+n\mu\right)}\frac{(ax^{-\frac{\mu}{k}})^n}{n!},
\end{eqnarray*}
and thus theorem is proved using (\ref{1.2}).
\end{proof}
\begin{corollary}\label{c4.4}
Let $\alpha,\beta,\gamma,\rho\in\mathbb{C}$ and $k\in\mathbb{R}^+$ be such that $\operatorname{Re}\left(\frac{\rho}{k}\right)>\max\{\operatorname{Re}(-\alpha-\gamma),$ $\operatorname{Re}(\beta)+[\operatorname{Re}(\alpha)]+1\}$, and also let $a\in\mathbb{C},\ \mu>0$. If $\Delta>-1$ in (\ref{2.1}), then the right-hand sided generalized fractional differentiation $D_{-}^{\alpha,\beta,\gamma}$ of ${}_{p}\Psi_q^k$ is given for $x>0$ by
\begin{eqnarray*}\label{4.11}
&&\left(D_{-}^{\alpha,\beta,\gamma}\left(t^{-\frac{\rho}{k}}{}_{p}\Psi_q^k\Bigg[\left.
\begin{matrix}
    (a_i,\alpha_i)_{1,p}\\ 
    (b_j,\beta_j)_{1,q}
  \end{matrix}
\right|at^{-\frac{\mu}{k}}\Bigg]\right)\right)(x)\nonumber\\
&&=k^{-\alpha}x^{\beta-\frac{\rho}{k}}{}_{p+2}\Psi_{q+2}^k\Bigg[\left.
\begin{matrix}
    (a_i,\alpha_i)_{1,p}&(-k\beta+\rho,\mu)&(k\alpha+k\gamma+\rho,\mu)\\ 
    (b_j,\beta_j)_{1,q}&(\rho,\mu)&(-k\beta+k\gamma+\rho,\mu)
  \end{matrix}
\right|ax^{-\frac{\mu}{k}}\Bigg].
\end{eqnarray*}
\end{corollary}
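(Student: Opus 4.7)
The plan is to derive Corollary \ref{c4.4} as a specialization of Theorem \ref{t4.2} via the second identity in (\ref{1.20}), namely $\left(D_{-}^{0,\alpha',\beta_0,\beta',\gamma_0}f\right)(x)=\left(D_{-}^{\gamma_0,\alpha'-\gamma_0,\beta'-\gamma_0}f\right)(x)$, where I write $\beta_0$ and $\gamma_0$ for the MSM parameters to keep them distinct from the Saigo parameters $\beta,\gamma$ appearing on the right. To produce the Saigo operator $D_{-}^{\alpha,\beta,\gamma}$ I match parameters via $\gamma_0=\alpha$, $\alpha'=\alpha+\beta$, $\beta'=\alpha+\gamma$, while $\beta_0$ is free because it does not appear on the Saigo side.

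Next I would verify the hypothesis of Theorem \ref{t4.2} under this substitution. One computes $\operatorname{Re}(-\beta')=\operatorname{Re}(-\alpha-\gamma)$ and $\operatorname{Re}(\alpha+\alpha'-\gamma_0)+[\operatorname{Re}(\gamma_0)]+1=\operatorname{Re}(\beta)+[\operatorname{Re}(\alpha)]+1$, which recover exactly the two bounds imposed in the corollary. The remaining MSM constraint reads $\operatorname{Re}(\alpha'+\beta_0-\gamma_0)=\operatorname{Re}(\beta+\beta_0)$; choosing for instance $\beta_0=0$ makes this quantity strictly dominated by $\operatorname{Re}(\beta)+[\operatorname{Re}(\alpha)]+1$, so Theorem \ref{t4.2} applies.

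The remaining work is parameter bookkeeping on the right-hand side of (\ref{4.6}). The prefactor becomes $k^{-\gamma_0}x^{\alpha+\alpha'-\gamma_0-\rho/k}=k^{-\alpha}x^{\beta-\rho/k}$, matching the corollary. For the six upper/lower parameter pairs in ${}_{p+3}\Psi_{q+3}^k$, direct substitution gives upper pairs $(k\alpha+k\gamma+\rho,\mu)$, $(-k\beta+\rho,\mu)$, $(-k\beta-k\beta_0+\rho,\mu)$ and lower pairs $(\rho,\mu)$, $(-k\beta+k\gamma+\rho,\mu)$, $(-k\beta-k\beta_0+\rho,\mu)$. The pair $(-k\beta-k\beta_0+\rho,\mu)$ is identical in the numerator and denominator, so the corresponding $\Gamma_k$ factors cancel in each term of the series, collapsing ${}_{p+3}\Psi_{q+3}^k$ to ${}_{p+2}\Psi_{q+2}^k$ with precisely the parameters listed in the corollary.

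The only conceptual point to check is that the auxiliary parameter $\beta_0$ drops out completely; this is immediate once the identical top/bottom pair is identified, so the argument leaves no artifact from its arbitrary choice. The rest is routine substitution, with no analytic difficulty beyond what is already handled in Theorem \ref{t4.2}.
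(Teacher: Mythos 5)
Your proof is correct and takes essentially the paper's own (implicit) route: Corollary \ref{c4.4} is obtained from Theorem \ref{t4.2} precisely via the reduction formula (\ref{1.20}), with the choice $\gamma\rightarrow\alpha$, $\alpha'\rightarrow\alpha+\beta$, $\beta'\rightarrow\alpha+\gamma$, and the free MSM parameter disappearing through the cancellation of the identical numerator/denominator pair $(-k\beta-k\beta_0+\rho,\mu)$ that you identify. The only minor point is that your domination of the middle constraint by $\operatorname{Re}(\beta)+[\operatorname{Re}(\alpha)]+1$ when $\beta_0=0$ relies on the standing (implicit) assumption $\operatorname{Re}(\alpha)>0$ needed for the Saigo derivative $D_{-}^{\alpha,\beta,\gamma}$ to be defined, which is harmless here.
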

\begin{corollary}\label{c4.6}
Let $\alpha,\gamma,\rho\in\mathbb{C}$ and $k\in\mathbb{R}^+$ be such that $\operatorname{Re}\left(\frac{\rho}{k}\right)>\max\{[\operatorname{Re}(\alpha)]+1,$ $\operatorname{Re}(-\alpha-\gamma)\}$, and also let $a\in\mathbb{C},\ \mu>0$. If $\Delta>-1$ in (\ref{2.1}), then the right-hand sided Erd\'elyi-Kober fractional differentiation $D_{\gamma,\alpha}^{-}$ $(=D_{-}^{\alpha,0,\gamma})$ of ${}_{p}\Psi_q^k$ is given for $x>0$ by
\begin{eqnarray*}\label{4.13}
&&\left(D_{\gamma,\alpha}^{-}\left(t^{-\frac{\rho}{k}}{}_{p}\Psi_q^k\Bigg[\left.
\begin{matrix}
    (a_i,\alpha_i)_{1,p}\\ 
    (b_j,\beta_j)_{1,q}
  \end{matrix}
\right|at^{-\frac{\mu}{k}}\Bigg]\right)\right)(x)\nonumber\\
&&=k^{-\alpha}x^{-\frac{\rho}{k}}{}_{p+1}\Psi_{q+1}^k\Bigg[\left.
\begin{matrix}
    (a_i,\alpha_i)_{1,p}&(k\alpha+k\gamma+\rho,\mu)\\ 
    (b_j,\beta_j)_{1,q}&(k\gamma+\rho,\mu)
  \end{matrix}
\right|ax^{-\frac{\mu}{k}}\Bigg].
\end{eqnarray*}
\end{corollary}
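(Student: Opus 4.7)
The plan is to obtain Corollary \ref{c4.6} as an immediate specialization of Corollary \ref{c4.4} via the definitional identity $D_{\gamma,\alpha}^{-}=D_{-}^{\alpha,0,\gamma}$, namely by setting $\beta=0$ throughout Corollary \ref{c4.4}.

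First, I would check that the hypothesis of Corollary \ref{c4.6} is exactly what one gets from the hypothesis of Corollary \ref{c4.4} at $\beta=0$: the condition $\operatorname{Re}(\rho/k)>\max\{\operatorname{Re}(-\alpha-\gamma),\operatorname{Re}(\beta)+[\operatorname{Re}(\alpha)]+1\}$ collapses to $\operatorname{Re}(\rho/k)>\max\{[\operatorname{Re}(\alpha)]+1,\operatorname{Re}(-\alpha-\gamma)\}$, which is precisely the assumption in the statement to be proved. The remaining hypotheses on $a,\mu,k$ and on $\Delta$ transfer verbatim.

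Next, I would substitute $\beta=0$ in the right-hand side of Corollary \ref{c4.4}. The prefactor $k^{-\alpha}x^{\beta-\rho/k}$ becomes $k^{-\alpha}x^{-\rho/k}$. Among the parameters of ${}_{p+2}\Psi_{q+2}^{k}$, the upper pair $(-k\beta+\rho,\mu)$ becomes $(\rho,\mu)$ and the lower pair $(-k\beta+k\gamma+\rho,\mu)$ becomes $(k\gamma+\rho,\mu)$, while the lower pair $(\rho,\mu)$ is unchanged. The upper list thus reads $(a_i,\alpha_i)_{1,p},(\rho,\mu),(k\alpha+k\gamma+\rho,\mu)$ and the lower list reads $(b_j,\beta_j)_{1,q},(\rho,\mu),(k\gamma+\rho,\mu)$.

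Finally, I would invoke the standard reduction principle for the generalized $K$-Wright function: whenever the same pair appears in both the numerator and denominator parameter lists, it contributes the factor $\Gamma_k(\rho+n\mu)/\Gamma_k(\rho+n\mu)=1$ to every term of the defining series (\ref{1.2}), so it may be cancelled and the function drops from ${}_{p+2}\Psi_{q+2}^{k}$ to ${}_{p+1}\Psi_{q+1}^{k}$. Cancelling the common pair $(\rho,\mu)$ leaves precisely the upper parameters $(a_i,\alpha_i)_{1,p},(k\alpha+k\gamma+\rho,\mu)$ and lower parameters $(b_j,\beta_j)_{1,q},(k\gamma+\rho,\mu)$ that appear on the right-hand side of Corollary \ref{c4.6}, completing the proof. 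There is no genuine obstacle: the entire argument is a routine specialization, and the only care required is that the cancelled factors $\Gamma_k(\rho+n\mu)$ are nonzero and finite for all $n\in\mathbb{N}_0$, which is guaranteed by the admissibility built into the hypotheses of Corollary \ref{c4.4}.
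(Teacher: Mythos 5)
Your proof is correct and follows the route the paper intends (the paper gives no explicit proof of this corollary): specialize the Saigo case, Corollary \ref{c4.4}, at $\beta=0$ via $D_{\gamma,\alpha}^{-}=D_{-}^{\alpha,0,\gamma}$, check that the hypothesis collapses to the stated one, and cancel the common pair $(\rho,\mu)$ from the numerator and denominator parameter lists to drop from ${}_{p+2}\Psi_{q+2}^{k}$ to ${}_{p+1}\Psi_{q+1}^{k}$. All substitutions and the cancellation check out.
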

\begin{remark}
In view of Remark \ref{r4.1}, Theorems \ref{t4.1} and \ref{t4.2} respectively follow from Theorems \ref{t3.1} and \ref{t3.2}.  
\end{remark}

\section{Caputo-type MSM fractional differentiation of ${}_{p}\Psi_q^k$}
\setcounter{equation}{0}
The Riemann-Liouville derivatives lead to difficulties while applying to real world problems, especially in the context of initial conditions. Hence, Caputo derivative is being extensively used in applications as the initial conditions are physically meaningful. Rao \textit{et al.} \cite{Rao15}, introduced Caputo-type fractional derivative, which involve Gauss hypergeometric function in the kernel. For $\alpha,\beta,\gamma\in\mathbb{C}$ and $x\in\mathbb{R}^+$ with $\operatorname{Re}(\alpha)>0$, the left- and right-hand sided Caputo fractional differential operators associated with Gauss hypergeometric function are defined by
\begin{equation}\label{c5.1}
\left({}^{c}D_{0+}^{\alpha,\beta,\gamma}f\right)(x)=\left(I_{0+}^{-\alpha+[\operatorname{Re}(\alpha)]+1,-\beta-[\operatorname{Re}(\alpha)]-1,\alpha+\gamma-[\operatorname{Re}(\alpha)]-1}f^{([\operatorname{Re}(\alpha)]+1)}\right)(x)
\end{equation}
and
\begin{equation}\label{c5.2}
\left({}^{c}D_{-}^{\alpha,\beta,\gamma}f\right)(x)=(-1)^{[\operatorname{Re}(\alpha)]+1}\left(I_{-}^{-\alpha+[\operatorname{Re}(\alpha)]+1,-\beta-[\operatorname{Re}(\alpha)]-1,\alpha+\gamma}f^{\left([\operatorname{Re}(\alpha)]+1\right)}\right)(x).
\end{equation}
The relation between the Caputo-type MSM fractional derivative and the MSM fractional derivative is same as the relation between the Caputo fractional derivative and the Riemann-Liouville fractional derivative, \textit{i.e.}, the operations of integration and differentiation
are interchanged in the corresponding definitions. For $\alpha,\alpha',\beta,\beta',\gamma\in\mathbb{C}$ and $x\in\mathbb{R}^+$ with $\operatorname{Re}(\gamma)>0$, the left- and right-hand sided Caputo-type MSM fractional differential operators associated with third Appell function are defined by
\begin{equation}\label{5.1}
\left({}^{c}D_{0+}^{\alpha,\alpha',\beta,\beta',\gamma}f\right)(x)=\left(I_{0+}^{-\alpha',-\alpha,-\beta'+[\operatorname{Re}(\gamma)]+1,-\beta,-\gamma+[\operatorname{Re}(\gamma)]+1}f^{([\operatorname{Re}(\gamma)]+1)}\right)(x)
\end{equation}
and
\begin{equation}\label{5.2}
\left({}^{c}D_{-}^{\alpha,\alpha',\beta,\beta',\gamma}f\right)(x)=\left(-1\right)^{[\operatorname{Re}(\gamma)]+1}\left(I_{-}^{-\alpha',-\alpha,-\beta',-\beta+[\operatorname{Re}(\gamma)]+1,-\gamma+[\operatorname{Re}(\gamma)]+1}f^{({[\operatorname{Re}(\gamma)]+1})}\right)(x),
\end{equation}
respectively, where $f^{(n)}$ denotes the $n$-th derivative of $f$. Fractional operators (\ref{5.1}) and (\ref{5.2}) are connected to (\ref{c5.1}) and (\ref{c5.2}) as follows:
\begin{equation}\label{c1.20}
\left({}^{c}D_{0+}^{0,\alpha',\beta,\beta',\gamma}f\right)(x)=\left({}^{c}D_{0+}^{\gamma,\alpha'-\gamma,\beta'-\gamma}f\right)(x),\ \ \ \ 
\left({}^{c}D_{-}^{0,\alpha',\beta,\beta',\gamma}f\right)(x)=\left({}^{c}D_{-}^{\gamma,\alpha'-\gamma,\beta'-\gamma}f\right)(x).
\end{equation}
In this section, we study the Caputo-type Marichev-Saigo-Maeda fractional differentiation of the generalized $K$-Wright function.

\begin{lemma}\label{ll5.1}
Let $\alpha,\alpha',\beta,\beta',\gamma,\rho\in\mathbb{C}$ and $m=[\operatorname{Re}(\gamma)]+1$.
\begin{itemize}
\item[(a)] If $\operatorname{Re}(\rho)-m>$ $\max\{0,\operatorname{Re}(-\alpha+\beta),\operatorname{Re}(-\alpha-\alpha'-\beta'+\gamma)\}$, then
\begin{equation}\label{l5.1}
\left({}^{c}D_{0+}^{\alpha,\alpha',\beta,\beta',\gamma}t^{\rho-1}\right)(x)=\frac{\Gamma(\rho)\Gamma(\alpha-\beta+\rho-m)\Gamma(\alpha+\alpha'+\beta'-\gamma+\rho-m)}{\Gamma(-\beta+\rho-m)\Gamma(\alpha+\alpha'-\gamma+\rho)\Gamma(\alpha+\beta'-\gamma+\rho-m)}x^{\alpha+\alpha'-\gamma+\rho-1}.
\end{equation}
\item[(b)] If $\operatorname{Re}(\rho)+m>$ $\max\{\operatorname{Re}(-\beta'),\operatorname{Re}(\alpha'+\beta-\gamma),\operatorname{Re}(\alpha+\alpha'-\gamma)+[\operatorname{Re}(\gamma)]+1\}$, then
\begin{equation}\label{l5.2}
\left({}^{c}D_{-}^{\alpha,\alpha',\beta,\beta',\gamma}t^{-\rho}\right)(x)=\frac{\Gamma\left(\beta'+\rho+m\right)\Gamma\left(-\alpha-\alpha'+\gamma+\rho\right)\Gamma\left(-\alpha'-\beta+\gamma+\rho+m\right)}{\Gamma\left(\rho\right)\Gamma\left(-\alpha'+\beta'+\rho+m\right)\Gamma\left(-\alpha-\alpha'-\beta+\gamma+\rho+m\right)}x^{\alpha+\alpha'-\gamma-\rho}.
\end{equation}
\end{itemize} 
\end{lemma}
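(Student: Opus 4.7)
The plan is to exploit the definitions~(\ref{5.1}) and~(\ref{5.2}) of the Caputo-type MSM operators, each of which expresses the operator as an ordinary $m$-fold derivative $(m=[\operatorname{Re}(\gamma)]+1)$ followed by an MSM fractional integral with shifted and permuted parameters. Since the Caputo version places the derivative \emph{before} (i.e., inside) the integral, the action on a pure power reduces to differentiating the power first and then applying Lemma~\ref{l2.1} with an appropriately shifted exponent, rather than going through Lemma~\ref{ll4.1}.

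For part~(a) I would first compute
\[
\frac{d^{m}}{dt^{m}}\,t^{\rho-1}=\frac{\Gamma(\rho)}{\Gamma(\rho-m)}\,t^{(\rho-m)-1},
\]
and then, by~(\ref{5.1}), express the left-hand side of~(\ref{l5.1}) as
\[
\frac{\Gamma(\rho)}{\Gamma(\rho-m)}\left(I_{0+}^{-\alpha',-\alpha,-\beta'+m,-\beta,-\gamma+m}\,t^{(\rho-m)-1}\right)(x).
\]
Lemma~\ref{l2.1}(a) now applies under the substitutions $\alpha\mapsto-\alpha'$, $\alpha'\mapsto-\alpha$, $\beta\mapsto-\beta'+m$, $\beta'\mapsto-\beta$, $\gamma\mapsto-\gamma+m$, $\rho\mapsto\rho-m$. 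The factor $\Gamma(\rho-m)$ cancels the corresponding gamma in the numerator of the integral formula, the extra $m$'s entering the $\beta$- and $\gamma$-substitutions cancel in the combinations appearing in the gamma arguments, and the $x$-exponent collapses to $\alpha+\alpha'-\gamma+\rho-1$; the identity~(\ref{l5.1}) drops out.

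Part~(b) proceeds identically, starting from
\[
\frac{d^{m}}{dt^{m}}\,t^{-\rho}=(-1)^{m}\frac{\Gamma(\rho+m)}{\Gamma(\rho)}\,t^{-(\rho+m)}.
\]
The $(-1)^{m}$ built into~(\ref{5.2}) cancels the $(-1)^{m}$ produced by the differentiation, and Lemma~\ref{l2.1}(b) is then applied with $\rho\mapsto\rho+m$ and the analogous parameter substitutions to yield~(\ref{l5.2}).

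The only step that warrants care, and the one I would execute most carefully, is the translation of hypotheses: one must check that the stated condition on $\operatorname{Re}(\rho)$ in each part is exactly what Lemma~\ref{l2.1} demands after substitution. For~(a), the requirement $\operatorname{Re}(\rho-m)>\max\{0,\operatorname{Re}(-\alpha+\beta),\operatorname{Re}(-\alpha-\alpha'-\beta'+\gamma)\}$ of Lemma~\ref{l2.1}(a) matches the hypothesis term-by-term after the substitutions; for~(b) the same bookkeeping reproduces $\operatorname{Re}(\rho)+m>\max\{\operatorname{Re}(-\beta'),\operatorname{Re}(\alpha'+\beta-\gamma),\operatorname{Re}(\alpha+\alpha'-\gamma)+m\}$. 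I do not anticipate any deeper obstacle: the content of the lemma is precisely the observation that for a pure power, swapping the order of $m$-fold differentiation and MSM integration amounts to a shift of the exponent parameter by $\pm m$ together with an overall gamma-ratio factor.
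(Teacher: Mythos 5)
Your proposal is correct and follows exactly the route the paper takes: apply the definitions (\ref{5.1})--(\ref{5.2}), differentiate the power first to pick up the factor $\Gamma(\rho)/\Gamma(\rho-m)$ (resp.\ $(-1)^m\Gamma(\rho+m)/\Gamma(\rho)$, with the sign cancelling against the $(-1)^m$ in the definition), and then invoke Lemma~\ref{l2.1} with the shifted parameters; the hypothesis translation you describe is also the one implicitly used in the paper. Nothing further is needed.
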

\begin{proof} 
\begin{itemize}
\item[(a)] From (\ref{5.1}), we have
\begin{eqnarray*}
\left({}^{c}D_{0+}^{\alpha,\alpha',\beta,\beta',\gamma}t^{\rho-1}\right)(x)
&=&\left(I_{0+}^{-\alpha',-\alpha,-\beta'+m,-\beta,-\gamma+m}\frac{d^m}{\,dt^m}t^{\rho-1}\right)(x)\\
&=&\frac{\Gamma(\rho)}{\Gamma(\rho-m)}\left(I_{0+}^{-\alpha',-\alpha,-\beta'+m,-\beta,-\gamma+m}t^{\rho-m-1}\right)(x),
\end{eqnarray*}
which on using (\ref{2.4}) gives (\ref{l5.1}).
\item[(b)] From (\ref{5.2}), we have
\begin{eqnarray*}\label{l5.7}
\left({}^{c}D_{-}^{\alpha,\alpha',\beta,\beta',\gamma}t^{-\rho}\right)(x)
&=&\left(-1\right)^m\left(I_{-}^{-\alpha',-\alpha,-\beta',-\beta+m,-\gamma+m}\frac{d^m}{\,dt^m}t^{-\rho}\right)(x)\\
&=&\frac{\Gamma(\rho+m)}{\Gamma(\rho)}\left(I_{-}^{-\alpha',-\alpha,-\beta',-\beta+m,-\gamma+m}t^{-\rho-m}\right)(x),
\end{eqnarray*}
and thus (\ref{l5.2}) follows from (\ref{2.5}).
\end{itemize}
\end{proof}
\begin{theorem}\label{t5.1}
Let $\alpha,\alpha',\beta,\beta',\gamma,\rho\in\mathbb{C}$, $m=[\operatorname{Re}(\gamma)]+1$ and $k\in\mathbb{R}^+$ be such that $\operatorname{Re}\left(\frac{\rho}{k}\right)-m>\max\{0,\operatorname{Re}(-\alpha+\beta),\operatorname{Re}(-\alpha-\alpha'-\beta'+\gamma)\}$. Also, let $a\in\mathbb{C}$ and $\mu>0$. If $\Delta>-1$ in (\ref{2.1}), then for $x>0$
\begin{eqnarray}\label{5.3}
&&\left({}^{c}D_{0+}^{\alpha,\alpha',\beta,\beta',\gamma}\left(t^{\frac{\rho}{k}-1}{}_{p}\Psi_q^k\Bigg[\left.
\begin{matrix}
    (a_i,\alpha_i)_{1,p}\\ 
    (b_j,\beta_j)_{1,q}
  \end{matrix}
\right|at^{\frac{\mu}{k}}\Bigg]\right)\right)(x)\nonumber\\
&=&k^{-\gamma}x^{\alpha+\alpha'-\gamma+\frac{\rho}{k}-1}{}_{p+3}\Psi_{q+3}^k\Bigg[
\begin{matrix}
    (a_i,\alpha_i)_{1,p}&(\rho,\mu)\\ 
    (b_j,\beta_j)_{1,q}&(-k\beta+\rho-km,\mu)
  \end{matrix}\nonumber\\
& &\left.
\begin{matrix}
    (k\alpha-k\beta+\rho-km,\mu)&(k\alpha+k\alpha'+k\beta'-k\gamma+\rho-km,\mu)\\ 
    (k\alpha+k\alpha'-k\gamma+\rho,\mu)&(k\alpha+k\beta'-k\gamma+\rho-km,\mu)
  \end{matrix}
\right|ax^{\frac{\mu}{k}}\Bigg].
\end{eqnarray}
\end{theorem}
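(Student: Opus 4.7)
The plan is to mirror the proof of Theorem \ref{t4.1}, substituting the Caputo power-function formula Lemma \ref{ll5.1}(a) for its Riemann--Liouville analogue Lemma \ref{ll4.1}(a). All of the analytic work has already been absorbed into Lemma \ref{ll5.1}(a), whose proof rewrites ${}^{c}D_{0+}^{\alpha,\alpha',\beta,\beta',\gamma} t^{\rho-1}$ as an ordinary MSM integral of $t^{\rho-m-1}$ via the $m$-fold derivative, so what remains is standard series manipulation combined with a careful conversion to $k$-gammas.

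First, I would expand the generalized $K$-Wright function via (\ref{1.2}), rewriting the left-hand side of (\ref{5.3}) as the Caputo-type operator applied to $\sum_{n=0}^{\infty} c_n\, t^{\frac{\rho}{k}+\frac{n\mu}{k}-1}$ with $c_n = \frac{\prod_{i=1}^{p}\Gamma_k(a_i+n\alpha_i)}{\prod_{j=1}^{q}\Gamma_k(b_j+n\beta_j)}\cdot\frac{a^n}{n!}$. Second, I would interchange the operator with the summation; the condition $\Delta > -1$ together with Theorem \ref{t2.1}(a) makes the series entire, the termwise $m$-th derivatives converge uniformly on compact subsets of $(0,\infty)$, and the ensuing MSM integral can be taken inside the sum exactly as in Theorems \ref{t3.1} and \ref{t3.2}. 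Third, I would apply Lemma \ref{ll5.1}(a) with $\rho$ replaced by $\frac{\rho}{k} + \frac{n\mu}{k}$; the hypothesis $\operatorname{Re}(\frac{\rho}{k}) - m > \max\{0, \operatorname{Re}(-\alpha+\beta), \operatorname{Re}(-\alpha-\alpha'-\beta'+\gamma)\}$ ensures validity at $n=0$, and $\mu > 0$ propagates it for every $n \in \mathbb{N}_0$. Fourth, I would invoke (\ref{1.6}) to convert each ordinary gamma into a $k$-gamma; a routine count shows that the $n$-dependent $k$-powers cancel and the constant $k$-factors combine to give $k^{-\gamma}$, as in Theorem \ref{t4.1}. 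Finally, (\ref{1.2}) reassembles the series into the claimed ${}_{p+3}\Psi_{q+3}^k$.

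The main obstacle is parametric bookkeeping. In Lemma \ref{ll5.1}(a), four of the six gammas carry the Caputo shift ``$-m$'' (those arising from the MSM integral of $t^{\rho-m-1}$), whereas $\Gamma(\rho)$ in the numerator and $\Gamma(\alpha+\alpha'-\gamma+\rho)$ in the denominator do not. After substituting $\rho \to \frac{\rho}{k} + \frac{n\mu}{k}$ and converting to $k$-gammas via (\ref{1.6}), the shifts become the ``$-km$'' adjustments in the parameters $(-k\beta+\rho-km,\mu)$, $(k\alpha-k\beta+\rho-km,\mu)$, $(k\alpha+k\alpha'+k\beta'-k\gamma+\rho-km,\mu)$, and $(k\alpha+k\beta'-k\gamma+\rho-km,\mu)$ appearing in (\ref{5.3}), while $(\rho,\mu)$ and $(k\alpha+k\alpha'-k\gamma+\rho,\mu)$ inherit their unshifted forms. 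Once this alignment is checked, the theorem is immediate.
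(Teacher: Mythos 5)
Your proposal is correct and follows essentially the same route as the paper: expand via (\ref{1.2}), integrate/differentiate term by term, apply Lemma \ref{ll5.1}(a) with $\rho$ replaced by $\frac{\rho}{k}+\frac{n\mu}{k}$, convert to $k$-gammas via (\ref{1.6}) to extract the factor $k^{-\gamma}$, and reassemble with (\ref{1.2}). Your bookkeeping of which four gamma factors carry the $-m$ (hence $-km$) shift, and which two do not, matches the paper exactly.
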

\begin{proof}
By using (\ref{1.6}) and (\ref{l5.1}), we get
\begin{eqnarray*}\label{5.4}
&&\left({}^{c}D_{0+}^{\alpha,\alpha',\beta,\beta',\gamma}\left(t^{\frac{\rho}{k}-1}{}_{p}\Psi_q^k\Bigg[\left.
\begin{matrix}
    (a_i,\alpha_i)_{1,p}\\ 
    (b_j,\beta_j)_{1,q}
  \end{matrix}
\right|at^{\frac{\mu}{k}}\Bigg]\right)\right)(x)\nonumber\\
&=&\sum_{n=0}^{\infty}\frac{\prod_{i=1}^{p}\Gamma_k(a_i+n\alpha_i)}{\prod_{j=1}^{q}\Gamma_k(b_j+n\beta_j)}\frac{a^n}{n!}\left({}^{c}D_{0+}^{\alpha,\alpha',\beta,\beta',\gamma}t^{\frac{\rho}{k}+\frac{n\mu}{k}-1}\right)(x)\\
&=&\sum_{n=0}^{\infty}\frac{\prod_{i=1}^{p}\Gamma_k(a_i+n\alpha_i)}{\prod_{j=1}^{q}\Gamma_k(b_j+n\beta_j)}\frac{a^n}{n!} \frac{\Gamma(\frac{\rho}{k}+\frac{n\mu}{k})}{\Gamma(-\beta+\frac{\rho}{k}+\frac{n\mu}{k}-m)}\\
&&\times\frac{\Gamma\left(\alpha-\beta+\frac{\rho}{k}+\frac{n\mu}{k}-m\right)\Gamma\left(\alpha+\alpha'+\beta'-\gamma+\frac{\rho}{k}+\frac{n\mu}{k}-m\right)}{\Gamma\left(\alpha+\alpha'-\gamma+\frac{\rho}{k}+\frac{n\mu}{k}\right)\Gamma\left(\alpha+\beta'-\gamma+\frac{\rho}{k}+\frac{n\mu}{k}-m\right)}x^{\alpha+\alpha'-\gamma+\frac{\rho}{k}+\frac{n\mu}{k}-1}\\
&=&k^{-\gamma}x^{\alpha+\alpha'-\gamma+\frac{\rho}{k}-1}\sum_{n=0}^{\infty}\frac{\prod_{i=1}^{p}\Gamma_k(a_i+n\alpha_i)}{\prod_{j=1}^{q}\Gamma_k(b_j+n\beta_j)}\frac{\Gamma_k\left(\rho+n\mu\right)}{\Gamma_k\left(-k\beta+\rho+n\mu-km\right)}\\
&&\times\frac{\Gamma_k\left(k\alpha-k\beta+\rho+n\mu-km\right)\Gamma_k\left(k\alpha+k\alpha'+k\beta'-k\gamma+\rho+n\mu-km\right)}{\Gamma_k\left(k\alpha+k\alpha'-k\gamma+\rho+n\mu\right)\Gamma_k\left(k\alpha+k\beta'-k\gamma+\rho+n\mu-km\right)}\frac{(ax^{\frac{\mu}{k}})^n}{n!},
\end{eqnarray*}
and thus the proof is complete by using (\ref{1.2}).
\end{proof}
\begin{corollary}\label{c5.1}
Let $\alpha,\beta,\gamma,\rho\in\mathbb{C}$, $m=[\operatorname{Re}(\alpha)]+1$ and $k\in\mathbb{R}^+$ be such that $\operatorname{Re}\left(\frac{\rho}{k}\right)-m>\max\{0,\operatorname{Re}(-\alpha-\beta-\gamma)\}$, and also let $a\in\mathbb{C},\ \mu>0$. If $\Delta>-1$ in (\ref{2.1}), then the left-hand sided generalized Caputo fractional differentiation ${}^{c}D_{0+}^{\alpha,\beta,\gamma}$ of ${}_{p}\Psi_q^k$ is given for $x>0$ by
\begin{eqnarray*}\label{5.6}
&&\left({}^{c}D_{0+}^{\alpha,\beta,\gamma}\left(t^{\frac{\rho}{k}-1}{}_{p}\Psi_q^k\Bigg[\left.
\begin{matrix}
    (a_i,\alpha_i)_{1,p}\\ 
    (b_j,\beta_j)_{1,q}
  \end{matrix}
\right|at^{\frac{\mu}{k}}\Bigg]\right)\right)(x)\nonumber\\
&&=k^{-\alpha}x^{\beta+\frac{\rho}{k}-1}{}_{p+2}\Psi_{q+2}^k\Bigg[\left.
\begin{matrix}
    (a_i,\alpha_i)_{1,p}&(\rho,\mu)&(k\alpha+k\beta+k\gamma+\rho-km,\mu)\\ 
    (b_j,\beta_j)_{1,q}&(k\beta+\rho,\mu)&(k\gamma+\rho-km,\mu)
  \end{matrix}
\right|ax^{\frac{\mu}{k}}\Bigg].
\end{eqnarray*}
\end{corollary}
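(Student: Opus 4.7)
The plan is to derive Corollary 5.1 as a direct specialization of Theorem 5.1 using the reduction identity (\ref{c1.20}) between the Caputo-type MSM operators and the Caputo-type Saigo operators. Reading (\ref{c1.20}) right-to-left, I identify the Saigo Caputo operator ${}^{c}D_{0+}^{\alpha,\beta,\gamma}$ with an MSM Caputo operator by taking $\alpha_{\mathrm{MSM}}=0$, $\alpha'_{\mathrm{MSM}}=\alpha+\beta$, $\beta'_{\mathrm{MSM}}=\alpha+\gamma$, $\gamma_{\mathrm{MSM}}=\alpha$, and $\beta_{\mathrm{MSM}}=B$ with $B\in\mathbb{C}$ arbitrary (it does not enter the right-hand side of (\ref{c1.20})). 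Since $\gamma_{\mathrm{MSM}}=\alpha$, the integer $m=[\operatorname{Re}(\gamma_{\mathrm{MSM}})]+1$ of Theorem \ref{t5.1} matches the $m=[\operatorname{Re}(\alpha)]+1$ of Corollary \ref{c5.1}.

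Next I would substitute these values into the conclusion of Theorem \ref{t5.1}. The prefactor reduces as
\begin{equation*}
k^{-\gamma_{\mathrm{MSM}}}\,x^{\alpha_{\mathrm{MSM}}+\alpha'_{\mathrm{MSM}}-\gamma_{\mathrm{MSM}}+\frac{\rho}{k}-1}=k^{-\alpha}\,x^{\beta+\frac{\rho}{k}-1},
\end{equation*}
which is exactly the prefactor of Corollary \ref{c5.1}. Substituting into the six extra parameters of ${}_{p+3}\Psi_{q+3}^k$ produced by Theorem \ref{t5.1} yields, in the numerator row, the pairs $(\rho,\mu)$, $(-kB+\rho-km,\mu)$, $(k\alpha+k\beta+k\gamma+\rho-km,\mu)$, and in the denominator row, the pairs $(-kB+\rho-km,\mu)$, $(k\beta+\rho,\mu)$, $(k\gamma+\rho-km,\mu)$. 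The crucial observation is that the pair $(-kB+\rho-km,\mu)$ appears identically in both rows, so the corresponding ratio $\Gamma_k(-kB+\rho-km+n\mu)/\Gamma_k(-kB+\rho-km+n\mu)\equiv 1$ cancels term-by-term in the series, collapsing ${}_{p+3}\Psi_{q+3}^k$ to exactly the ${}_{p+2}\Psi_{q+2}^k$ stated in the corollary.

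The only point that requires a word of justification is the matching of hypotheses. Theorem \ref{t5.1} demands $\operatorname{Re}(\rho/k)-m>\max\{0,\operatorname{Re}(-\alpha_{\mathrm{MSM}}+\beta_{\mathrm{MSM}}),\operatorname{Re}(-\alpha_{\mathrm{MSM}}-\alpha'_{\mathrm{MSM}}-\beta'_{\mathrm{MSM}}+\gamma_{\mathrm{MSM}})\}$, which after substitution becomes $\operatorname{Re}(\rho/k)-m>\max\{0,\operatorname{Re}(B),\operatorname{Re}(-\alpha-\beta-\gamma)\}$. Since $B$ is arbitrary and does not affect the conclusion, I am free to choose it with sufficiently negative real part, so the middle term becomes vacuous and the requirement collapses to $\operatorname{Re}(\rho/k)-m>\max\{0,\operatorname{Re}(-\alpha-\beta-\gamma)\}$, matching the corollary exactly. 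There is no substantive obstacle: the whole argument is a parameter reduction followed by cancellation of two identical $k$-gamma factors in the series coefficient; the only care needed is tracking the $-km$ shifts inherited from the Caputo form of Lemma \ref{ll5.1}(a).
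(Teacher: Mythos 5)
Your proposal is correct and takes exactly the route the paper intends: Corollary \ref{c5.1} is obtained by specializing Theorem \ref{t5.1} through the connection formula (\ref{c1.20}) with $\alpha_{\mathrm{MSM}}=0$, $\alpha'_{\mathrm{MSM}}=\alpha+\beta$, $\beta'_{\mathrm{MSM}}=\alpha+\gamma$, $\gamma_{\mathrm{MSM}}=\alpha$, after which the dummy pair coming from the free parameter $\beta_{\mathrm{MSM}}$ cancels between the numerator and denominator rows and collapses ${}_{p+3}\Psi^k_{q+3}$ to ${}_{p+2}\Psi^k_{q+2}$; your handling of the hypotheses (choosing $\operatorname{Re}(\beta_{\mathrm{MSM}})\le 0$, e.g.\ $\beta_{\mathrm{MSM}}=0$, so that the middle term in the max is absorbed) is also sound. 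The parameter bookkeeping, the prefactor $k^{-\alpha}x^{\beta+\rho/k-1}$, and the $-km$ shifts all check out against the stated corollary.
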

\begin{corollary}\label{c5.3}
Let $\alpha,\gamma,\rho\in\mathbb{C}$, $m=[\operatorname{Re}(\alpha)]+1$ and $k\in\mathbb{R}^+$ be such that $\operatorname{Re}\left(\frac{\rho}{k}\right)-m>\max\{0,\operatorname{Re}(-\alpha-\gamma)\}$, and also let $a\in\mathbb{C},\ \mu>0$. If $\Delta>-1$ in (\ref{2.1}), then the left-hand sided Caputo-type Erd¶elyi-Kober fractional differentiation ${}^{c}D_{\gamma,\alpha}^{+}$ $(={}^{c}D_{0+}^{\alpha,0,\gamma})$ of ${}_{p}\Psi_q^k$ is given for $x>0$ by
\begin{eqnarray*}\label{5.8}
&&\left({}^{c}D_{\gamma,\alpha}^{+}\left(t^{\frac{\rho}{k}-1}{}_{p}\Psi_q^k\Bigg[\left.
\begin{matrix}
    (a_i,\alpha_i)_{1,p}\\ 
    (b_j,\beta_j)_{1,q}
  \end{matrix}
\right|at^{\frac{\mu}{k}}\Bigg]\right)\right)(x)\nonumber\\
&&=k^{-\alpha}x^{\frac{\rho}{k}-1}{}_{p+1}\Psi_{q+1}^k\Bigg[\left.
\begin{matrix}
    (a_i,\alpha_i)_{1,p}&(k\alpha+k\gamma+\rho-km,\mu)\\ 
    (b_j,\beta_j)_{1,q}&(k\gamma+\rho-km,\mu)
  \end{matrix}
\right|ax^{\frac{\mu}{k}}\Bigg].
\end{eqnarray*}
\end{corollary}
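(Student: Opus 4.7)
The plan is to derive Corollary \ref{c5.3} as a direct specialization of Corollary \ref{c5.1}, exploiting the operator identity ${}^{c}D_{\gamma,\alpha}^{+} = {}^{c}D_{0+}^{\alpha,0,\gamma}$ that realizes the left-hand sided Caputo-type Erd\'elyi-Kober derivative as the $\beta = 0$ case of the Caputo-type Saigo derivative. This parallels exactly the way Corollaries \ref{c3.3} and \ref{c4.6} were extracted from the generalized Saigo assertions in the preceding sections.

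First I would check consistency of the hypotheses. Setting $\beta = 0$ in the constraint $\operatorname{Re}(\rho/k) - m > \max\{0, \operatorname{Re}(-\alpha - \beta - \gamma)\}$ of Corollary \ref{c5.1} gives precisely $\operatorname{Re}(\rho/k) - m > \max\{0, \operatorname{Re}(-\alpha - \gamma)\}$, matching the hypothesis of Corollary \ref{c5.3}; and $m = [\operatorname{Re}(\alpha)] + 1$ matches in both statements.

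Next I would specialize the right-hand side of Corollary \ref{c5.1} by substituting $\beta = 0$. The prefactor $k^{-\alpha} x^{\beta + \rho/k - 1}$ collapses to $k^{-\alpha} x^{\rho/k - 1}$. In the parameter lists of the ${}_{p+2}\Psi_{q+2}^k$ series, the upper parameters become $(a_i,\alpha_i)_{1,p}, (\rho,\mu), (k\alpha + k\gamma + \rho - km, \mu)$ and the lower parameters become $(b_j,\beta_j)_{1,q}, (\rho,\mu), (k\gamma + \rho - km, \mu)$. The pair $(\rho, \mu)$ now appears identically in both numerator and denominator, so by the defining series (\ref{1.2}) the factors $\Gamma_k(\rho + n\mu)$ cancel term-by-term, reducing ${}_{p+2}\Psi_{q+2}^k$ to ${}_{p+1}\Psi_{q+1}^k$ with exactly the parameters in the target identity.

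I do not anticipate any real obstacle: the cancellation is immediate from the series definition, and the hypothesis $\operatorname{Re}(\rho/k) - m > 0$ guarantees that the arguments of the $\Gamma_k$ factors avoid the poles on $k\mathbb{Z}^-$ uniformly in $n$, so the term-by-term cancellation is valid without any subtle issues of convergence beyond what Corollary \ref{c5.1} already ensures. If a more self-contained derivation were desired, one could equally well repeat the argument of Theorem \ref{t5.1}: interchange sum and operator in the series expansion of $t^{\rho/k - 1}{}_{p}\Psi_q^k(at^{\mu/k})$, apply Lemma \ref{ll5.1}(a) with $\alpha' = \beta = \beta' = 0$ to the resulting power $t^{\rho/k + n\mu/k - 1}$, rewrite the ordinary gamma factors as $k$-gamma factors via (\ref{1.6}), and reassemble the sum using (\ref{1.2}); the simplifications produced by $\alpha' = \beta = \beta' = 0$ would yield precisely the same cancellations encountered above.
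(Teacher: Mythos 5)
Your main derivation is correct and is exactly the route the paper intends: Corollary \ref{c5.3} is the $\beta=0$ specialization of Corollary \ref{c5.1}, the hypotheses match under that substitution, and the coincident pair $(\rho,\mu)$ in the upper and lower parameter lists cancels term-by-term in the series (\ref{1.2}), collapsing ${}_{p+2}\Psi^k_{q+2}$ to the stated ${}_{p+1}\Psi^k_{q+1}$. One caution about your optional ``self-contained'' alternative: the Caputo-type Erd\'elyi-Kober operator ${}^{c}D^{+}_{\gamma,\alpha}={}^{c}D_{0+}^{\alpha,0,\gamma}$ is \emph{not} the five-parameter MSM operator with $\alpha'=\beta=\beta'=0$; by (\ref{c1.20}) the MSM reduction to Saigo requires the \emph{first} MSM parameter to vanish, so the correct specialization of Lemma \ref{ll5.1}(a) (or Theorem \ref{t5.1}) is $\alpha\to 0$, $\alpha'\to\alpha$, $\beta'\to\alpha+\gamma$, $\gamma\to\alpha$ (with $\beta$ then dropping out), and in particular $m=[\operatorname{Re}(\alpha)]+1$ rather than $[\operatorname{Re}(\gamma)]+1$. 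With the parameters as you wrote them, ${}^{c}D_{0+}^{\alpha,0,0,0,\gamma}$ would produce a different gamma-quotient and the wrong power of $x$, so if you want to keep that remark, restate it with the corrected substitution.
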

\begin{theorem}\label{t5.2}
Let $\alpha,\alpha',\beta,\beta',\gamma,\rho\in\mathbb{C}$, $m=[\operatorname{Re}(\gamma)]+1$ and $k\in\mathbb{R}^+$ be such that $\operatorname{Re}\left(\frac{\rho}{k}\right)+m>\max\{\operatorname{Re}(-\beta'),\operatorname{Re}(\alpha'+\beta-\gamma),\operatorname{Re}(\alpha+\alpha'-\gamma)+m\}$. Also, let $a\in\mathbb{C}$ and $\mu>0$. If $\Delta>-1$ in (\ref{2.1}), then 
\begin{eqnarray}\label{5.9}
&&\left({}^{c}D_{-}^{\alpha,\alpha',\beta,\beta',\gamma}\left(t^{-\frac{\rho}{k}}{}_{p}\Psi_q^k\Bigg[\left.
\begin{matrix}
    (a_i,\alpha_i)_{1,p}\\ 
    (b_j,\beta_j)_{1,q}
  \end{matrix}
\right|at^{-\frac{\mu}{k}}\Bigg]\right)\right)(x)\nonumber\\
&=&k^{-\gamma}x^{\alpha+\alpha'-\gamma-\frac{\rho}{k}}{}_{p+3}\Psi_{q+3}^k\Bigg[
\begin{matrix}
    (a_i,\alpha_i)_{1,p}&(k\beta'+\rho+km,\mu)\\ 
    (b_j,\beta_j)_{1,q}&(\rho,\mu)
  \end{matrix}\nonumber\\
& &\left.
\begin{matrix}
    (-k\alpha-k\alpha'+k\gamma+\rho,\mu)&(-k\alpha'-k\beta+k\gamma+\rho+km,\mu)\\ 
    (-k\alpha'+k\beta'+\rho+km,\mu)&(-k\alpha-k\alpha'-k\beta+k\gamma+\rho+km,\mu)
  \end{matrix}
\right|ax^{-\frac{\mu}{k}}\Bigg],
\end{eqnarray}
for $x>0$.
\end{theorem}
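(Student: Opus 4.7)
The plan is to mirror the proof of Theorem \ref{t5.1} (and of the Riemann--Liouville analog Theorem \ref{t4.2}), substituting the right-hand sided Caputo-type power formula Lemma \ref{ll5.1}(b) in place of Lemma \ref{ll4.1}(b). Concretely, I first expand the generalized $K$-Wright function in the integrand/operand via the defining series (\ref{1.2}), so that the left-hand side of (\ref{5.9}) becomes
\begin{equation*}
\left({}^{c}D_{-}^{\alpha,\alpha',\beta,\beta',\gamma}\left(t^{-\frac{\rho}{k}}\sum_{n=0}^{\infty}\frac{\prod_{i=1}^{p}\Gamma_k(a_i+n\alpha_i)}{\prod_{j=1}^{q}\Gamma_k(b_j+n\beta_j)}\frac{(at^{-\frac{\mu}{k}})^n}{n!}\right)\right)(x).
\end{equation*}

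Next I interchange the operator ${}^{c}D_{-}^{\alpha,\alpha',\beta,\beta',\gamma}$ with the infinite sum. This is justified because, under the hypothesis $\Delta>-1$, Theorem \ref{t2.1} guarantees the series defines an entire function and the convergence is uniform on compact subsets, while the fractional operator is built from an absolutely convergent Mellin-type integral in (\ref{1.14}) followed by a finite number of differentiations. After the interchange, each term is the Caputo-type right MSM derivative of the pure power $t^{-(\rho/k+n\mu/k)}$, which by Lemma \ref{ll5.1}(b) equals a product of five gamma factors times $x^{\alpha+\alpha'-\gamma-\rho/k-n\mu/k}$; the hypothesis $\operatorname{Re}(\rho/k)+m > \max\{\operatorname{Re}(-\beta'),\operatorname{Re}(\alpha'+\beta-\gamma),\operatorname{Re}(\alpha+\alpha'-\gamma)+m\}$ ensures the lemma is applicable for every $n\ge 0$ since $\mu>0$.

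Then I convert each ordinary gamma $\Gamma(\cdot)$ appearing in the $n$-th term into a $k$-gamma $\Gamma_k(\cdot)$ by the identity (\ref{1.6}), namely $\Gamma(z)=k^{1-z}\Gamma_k(kz)$. Carefully collecting the prefactors $k^{\,\cdot}$ arising from the five transformed gammas (three in the numerator, two in the denominator) produces exactly the overall constant $k^{-\gamma}$, independent of $n$, and converts the five gamma arguments to the expected $k$-gamma arguments: in the numerator $(k\beta'+\rho+km+n\mu)$, $(-k\alpha-k\alpha'+k\gamma+\rho+n\mu)$, $(-k\alpha'-k\beta+k\gamma+\rho+km+n\mu)$, and in the denominator $(\rho+n\mu)$, $(-k\alpha'+k\beta'+\rho+km+n\mu)$, $(-k\alpha-k\alpha'-k\beta+k\gamma+\rho+km+n\mu)$. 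Pulling the $x$-dependent factor $x^{\alpha+\alpha'-\gamma-\rho/k}$ out of the sum leaves the $n$-dependent $x$-factor as $(ax^{-\mu/k})^n$.

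Finally, reading the resulting series back through the definition (\ref{1.2}) identifies it as a ${}_{p+3}\Psi_{q+3}^k$ with the original $p$ top and $q$ bottom pairs $(a_i,\alpha_i)$, $(b_j,\beta_j)$ supplemented by the three new top pairs $(k\beta'+\rho+km,\mu)$, $(-k\alpha-k\alpha'+k\gamma+\rho,\mu)$, $(-k\alpha'-k\beta+k\gamma+\rho+km,\mu)$ and the three new bottom pairs $(\rho,\mu)$, $(-k\alpha'+k\beta'+\rho+km,\mu)$, $(-k\alpha-k\alpha'-k\beta+k\gamma+\rho+km,\mu)$ evaluated at $ax^{-\mu/k}$, which is precisely the right-hand side of (\ref{5.9}). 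The routine but error-prone bookkeeping of the $\pm km$ shifts (which reflect the extra $\Gamma(\rho+m)/\Gamma(\rho)$ factor produced by differentiating $t^{-\rho}$ $m$ times before integration) is the only real obstacle; everything else is a direct translation of the proof of Theorem \ref{t4.2} with Lemma \ref{ll5.1}(b) replacing Lemma \ref{ll4.1}(b).
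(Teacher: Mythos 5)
Your proposal is correct and follows essentially the same route as the paper: expand ${}_{p}\Psi_q^k$ via (\ref{1.2}), apply Lemma \ref{ll5.1}(b) termwise to $t^{-(\rho/k+n\mu/k)}$ (valid for all $n$ since $\mu>0$), convert to $k$-gammas via (\ref{1.6}) to extract the $n$-independent factor $k^{-\gamma}$, and reassemble as ${}_{p+3}\Psi_{q+3}^k$; your six transformed gamma arguments and the resulting parameter pairs match (\ref{5.9}) exactly. (Minor slip: the lemma produces six gamma factors, three in the numerator and three in the denominator, not ``five ... three and two''---but your explicit list is correct.)
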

\begin{proof}
By using (\ref{1.6}) and (\ref{l5.2}), we have
\begin{eqnarray*}
&&\left({}^{c}D_{-}^{\alpha,\alpha',\beta,\beta',\gamma}\left(t^{-\frac{\rho}{k}}{}_{p}\Psi_q^k\Bigg[\left.
\begin{matrix}\label{5.7}
    (a_i,\alpha_i)_{1,p}\\ 
    (b_j,\beta_j)_{1,q}
  \end{matrix}
\right|at^{-\frac{\mu}{k}}\Bigg]\right)\right)(x)\\
&=&\sum_{n=0}^{\infty}\frac{\prod_{i=1}^{p}\Gamma_k(a_i+n\alpha_i)}{\prod_{j=1}^{q}\Gamma_k(b_j+n\beta_j)}\frac{a^n}{n!}\left({}^{c}D_{-}^{\alpha,\alpha',\beta,\beta',\gamma}t^{-\frac{\rho}{k}-\frac{n\mu}{k}}\right)(x)\\
&=&\sum_{n=0}^{\infty}\frac{\prod_{i=1}^{p}\Gamma_k(a_i+n\alpha_i)}{\prod_{j=1}^{q}\Gamma_k(b_j+n\beta_j)}\frac{a^n}{n!}\frac{\Gamma\left(\beta'+\frac{\rho}{k}+\frac{n\mu}{k}+m\right)}{\Gamma\left(\frac{\rho}{k}+\frac{n\mu}{k}\right)}\\
& &\times\frac{\Gamma\left(-\alpha-\alpha'+\gamma+\frac{\rho}{k}+\frac{n\mu}{k}\right)\Gamma\left(-\alpha'-\beta+\gamma+\frac{\rho}{k}+\frac{n\mu}{k}+m\right)}{\Gamma\left(-\alpha'+\beta'+\frac{\rho}{k}+\frac{n\mu}{k}+m\right)\Gamma\left(-\alpha-\alpha'-\beta+\gamma+\frac{\rho}{k}+\frac{n\mu}{k}+m\right)}x^{\alpha+\alpha'-\gamma-\frac{\rho}{k}-\frac{n\mu}{k}}\\
&=&k^{-\gamma}x^{\alpha+\alpha'-\gamma-\frac{\rho}{k}}\sum_{n=0}^{\infty}\frac{\prod_{i=1}^{p}\Gamma_k(a_i+n\alpha_i)}{\prod_{j=1}^{q}\Gamma_k(b_j+n\beta_j)}\frac{\Gamma_k\left(k\beta'+\rho+n\mu+km\right)}{\Gamma_k\left(\rho+n\mu\right)}\\
& &\times\frac{\Gamma_k\left(-k\alpha-k\alpha'+k\gamma+\rho+n\mu\right)\Gamma_k\left(-k\alpha'-k\beta+k\gamma+\rho+n\mu+km\right)}{\Gamma_k\left(-k\alpha'+k\beta'+\rho+n\mu+km\right)\Gamma_k\left(-k\alpha-k\alpha'-k\beta+k\gamma+\rho+n\mu+km\right)}\frac{(ax^{-\frac{\mu}{k}})^n}{n!},
\end{eqnarray*}
and thus the theorem follows from (\ref{1.2}).
\end{proof}
\begin{corollary}\label{c5.4}
Let $\alpha,\beta,\gamma,\rho\in\mathbb{C}$, $m=[\operatorname{Re}(\alpha)]+1$ and $k\in\mathbb{R}^+$ be such that $\operatorname{Re}\left(\frac{\rho}{k}\right)+m>\max\{\operatorname{Re}(\beta)+m,\operatorname{Re}(-\alpha-\gamma)\}$, and also let $a\in\mathbb{C},\ \mu>0$. If $\Delta>-1$ in (\ref{2.1}), then the right-hand sided generalized Caputo fractional differentiation ${}^{c}D_{-}^{\alpha,\beta,\gamma}$ of ${}_{p}\Psi_q^k$ is given for $x>0$ by
\begin{eqnarray*}\label{5.10}
&&\left({}^{c}D_{-}^{\alpha,\beta,\gamma}\left(t^{-\frac{\rho}{k}}{}_{p}\Psi_q^k\Bigg[\left.
\begin{matrix}
    (a_i,\alpha_i)_{1,p}\\ 
    (b_j,\beta_j)_{1,q}
  \end{matrix}
\right|at^{-\frac{\mu}{k}}\Bigg]\right)\right)(x)\nonumber\\
&&=k^{-\alpha}x^{\beta-\frac{\rho}{k}}{}_{p+2}\Psi_{q+2}^k\Bigg[\left.
\begin{matrix}
    (a_i,\alpha_i)_{1,p}&(-k\beta+\rho,\mu)&(k\alpha+k\gamma+\rho+km,\mu)\\ 
    (b_j,\beta_j)_{1,q}&(\rho,\mu)&(-k\beta+k\gamma+\rho+km,\mu)
  \end{matrix}
\right|ax^{-\frac{\mu}{k}}\Bigg].
\end{eqnarray*}
\end{corollary}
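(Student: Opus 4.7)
My plan is to derive Corollary \ref{c5.4} directly from Theorem \ref{t5.2} by invoking the second identity in (\ref{c1.20}), which links the right-sided Caputo-type MSM operator to the right-sided Caputo-type Saigo operator. Specifically, writing the Saigo parameters as $(\alpha,\beta,\gamma)$, I choose MSM parameters $(0,\,\alpha+\beta,\,\beta^{*},\,\alpha+\gamma,\,\alpha)$, where $\beta^{*}\in\mathbb{C}$ is free. Under this choice, (\ref{c1.20}) gives
$$\left({}^{c}D_{-}^{0,\,\alpha+\beta,\,\beta^{*},\,\alpha+\gamma,\,\alpha}f\right)(x)=\left({}^{c}D_{-}^{\alpha,\beta,\gamma}f\right)(x),$$
so the left-hand side of the Corollary is a special case of the left-hand side of (\ref{5.9}).

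Next I will substitute these specific MSM parameters into the right-hand side of (\ref{5.9}). The prefactor becomes $k^{-\gamma}x^{\alpha+\alpha'-\gamma-\rho/k}=k^{-\alpha}x^{\beta-\rho/k}$, matching the Corollary. For the generalized $K$-Wright indices, the third upper parameter $(-k\alpha'-k\beta+k\gamma+\rho+km,\mu)$ specializes to $(-k\beta-k\beta^{*}+\rho+km,\mu)$, while the third lower parameter $(-k\alpha-k\alpha'-k\beta+k\gamma+\rho+km,\mu)$ specializes to the same expression $(-k\beta-k\beta^{*}+\rho+km,\mu)$. Hence these two entries cancel as a common factor in the Pochhammer-type quotient of the series, reducing ${}_{p+3}\Psi_{q+3}^{k}$ to ${}_{p+2}\Psi_{q+2}^{k}$. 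The surviving upper parameters are $(k\beta'+\rho+km,\mu)=(k\alpha+k\gamma+\rho+km,\mu)$ and $(-k\alpha-k\alpha'+k\gamma+\rho,\mu)=(-k\beta+\rho,\mu)$, and the surviving lower parameter beyond $(\rho,\mu)$ is $(-k\alpha'+k\beta'+\rho+km,\mu)=(-k\beta+k\gamma+\rho+km,\mu)$, precisely the expression asserted in Corollary \ref{c5.4}.

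The remaining task is to verify that the parameter hypotheses of Theorem \ref{t5.2} are implied by those of Corollary \ref{c5.4} for a suitable choice of the auxiliary parameter $\beta^{*}$. Substituting, the three conditions $\operatorname{Re}(-\beta')$, $\operatorname{Re}(\alpha'+\beta-\gamma)$, and $\operatorname{Re}(\alpha+\alpha'-\gamma)+m$ become $-\operatorname{Re}(\alpha+\gamma)$, $\operatorname{Re}(\beta+\beta^{*})$, and $\operatorname{Re}(\beta)+m$ respectively; the first and third exactly reproduce the hypotheses in Corollary \ref{c5.4}, while the middle one can be ensured by choosing $\beta^{*}$ with sufficiently negative real part (this choice does not affect the final formula since $\beta^{*}$ has already cancelled). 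No serious obstacle is expected; the only bookkeeping step that requires care is the parameter matching in the previous paragraph, where one must confirm that the cancellation is genuinely independent of $\beta^{*}$, which it is since $\beta^{*}$ enters the two cancelling entries through the identical combination $-k\beta-k\beta^{*}+\rho+km$.
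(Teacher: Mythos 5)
Your proposal is correct and follows exactly the route the paper intends (the paper states this corollary without explicit proof, deriving it from Theorem \ref{t5.2} via the second identity in (\ref{c1.20})): the specialization $(\alpha,\alpha',\beta,\beta',\gamma)\mapsto(0,\alpha+\beta,\beta^{*},\alpha+\gamma,\alpha)$, the cancellation of the two identical $\beta^{*}$-dependent parameters reducing ${}_{p+3}\Psi^k_{q+3}$ to ${}_{p+2}\Psi^k_{q+2}$, and the verification of the hypotheses all check out. The one point you handle with appropriate care --- that the auxiliary condition $\operatorname{Re}(\beta+\beta^{*})<\operatorname{Re}(\rho/k)+m$ can be met by a choice of $\beta^{*}$ that does not affect the conclusion --- is indeed the only subtlety, and your resolution is sound.
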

\begin{corollary}\label{c5.6}
Let $\alpha,\gamma,\rho\in\mathbb{C}$, $m=[\operatorname{Re}(\alpha)]+1$ and $k\in\mathbb{R}^+$ be such that $\operatorname{Re}\left(\frac{\rho}{k}\right)+m>\max\{m,\operatorname{Re}(-\alpha-\gamma)\}$, and also let $a\in\mathbb{C},\ \mu>0$. If $\Delta>-1$ in (\ref{2.1}), then the right-hand sided Caputo-type Erd¶elyi-Kober fractional differentiation ${}^{c}D_{\gamma,\alpha}^{-}$ $(={}^{c}D_{-}^{\alpha,0,\gamma})$ of ${}_{p}\Psi_q^k$ is given for $x>0$ by
\begin{eqnarray*}\label{5.12}
&&\left({}^{c}D_{\gamma,\alpha}^{-}\left(t^{-\frac{\rho}{k}}{}_{p}\Psi_q^k\Bigg[\left.
\begin{matrix}
    (a_i,\alpha_i)_{1,p}\\ 
    (b_j,\beta_j)_{1,q}
  \end{matrix}
\right|at^{-\frac{\mu}{k}}\Bigg]\right)\right)(x)\nonumber\\
&&=k^{-\alpha}x^{-\frac{\rho}{k}}{}_{p+1}\Psi_{q+1}^k\Bigg[\left.
\begin{matrix}
    (a_i,\alpha_i)_{1,p}&(k\alpha+k\gamma+\rho+km,\mu)\\ 
    (b_j,\beta_j)_{1,q}&(k\gamma+\rho+km,\mu)
  \end{matrix}
\right|ax^{-\frac{\mu}{k}}\Bigg].
\end{eqnarray*}
\end{corollary}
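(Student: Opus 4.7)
The plan is to derive Corollary \ref{c5.6} directly from Theorem \ref{t5.2} via the connection formula (\ref{c1.20}), which identifies the Caputo-type right-hand sided Erd\'elyi--Kober operator ${}^{c}D_{\gamma,\alpha}^{-}={}^{c}D_{-}^{\alpha,0,\gamma}$ with a Caputo-type MSM operator of a specific shape. Reading (\ref{c1.20}) backwards, one has
\[
{}^{c}D_{-}^{\alpha,0,\gamma}={}^{c}D_{-}^{0,\alpha,\beta,\alpha+\gamma,\alpha},
\]
where $\beta\in\mathbb{C}$ is free (it does not enter the right-hand side of (\ref{c1.20})). First I would fix the parameter correspondence
\[
(\alpha,\alpha',\beta,\beta',\gamma)_{\text{MSM}}=(0,\alpha,\beta,\alpha+\gamma,\alpha),\qquad m=[\operatorname{Re}(\alpha)]+1,
\]
and apply Theorem \ref{t5.2} to the function $t^{-\rho/k}\,{}_{p}\Psi_q^k\!\left[at^{-\mu/k}\right]$ with this parameter choice.

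Next I would simplify the resulting ${}_{p+3}\Psi_{q+3}^k$ series. Substituting into the six extra parameter pairs produced by Theorem \ref{t5.2} gives, upon using $-k\alpha-k\alpha'+k\gamma=0$ and $-k\alpha'+k\beta'=k\gamma$:
\begin{itemize}
\item upper: $(k\alpha+k\gamma+\rho+km,\mu),\ (\rho,\mu),\ (-k\beta+\rho+km,\mu)$;
\item lower: $(\rho,\mu),\ (k\gamma+\rho+km,\mu),\ (-k\beta+\rho+km,\mu)$.
\end{itemize}
The pairs $(\rho,\mu)$ and $(-k\beta+\rho+km,\mu)$ each appear in both the numerator and denominator of the defining series (\ref{1.2}), and since the corresponding $\Gamma_k$-factors cancel term by term, they can be removed. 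What remains is the single added upper pair $(k\alpha+k\gamma+\rho+km,\mu)$ and single added lower pair $(k\gamma+\rho+km,\mu)$, reducing ${}_{p+3}\Psi_{q+3}^k$ to ${}_{p+1}\Psi_{q+1}^k$ with exactly the shape asserted in Corollary \ref{c5.6}. The prefactor $k^{-\gamma_{\text{MSM}}}x^{\alpha_{\text{MSM}}+\alpha'_{\text{MSM}}-\gamma_{\text{MSM}}-\rho/k}$ collapses to $k^{-\alpha}x^{-\rho/k}$, matching the claim.

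Finally I would reconcile the hypotheses. The conditions of Theorem \ref{t5.2} become $\operatorname{Re}(\rho/k)+m>\max\{\operatorname{Re}(-\alpha-\gamma),\operatorname{Re}(\beta),m\}$. Since $\beta$ is a free auxiliary parameter inherited from (\ref{c1.20}) that drops out of both the left- and right-hand sides, I would simply choose $\beta$ with $\operatorname{Re}(\beta)$ sufficiently small (e.g.\ $\beta=0$), whereupon the $\operatorname{Re}(\beta)$-bound is absorbed by the $m$-bound and the remaining conditions reduce exactly to $\operatorname{Re}(\rho/k)+m>\max\{m,\operatorname{Re}(-\alpha-\gamma)\}$ stated in Corollary \ref{c5.6}. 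The hypothesis $\Delta>-1$ is unchanged, ensuring that ${}_{p}\Psi_q^k$ is entire so the term-by-term application of the operator (already justified in the proof of Theorem \ref{t5.2}) is legitimate.

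The only genuinely delicate step is the cancellation argument in the ${}_{p+3}\Psi_{q+3}^k\to{}_{p+1}\Psi_{q+1}^k$ reduction: one has to verify that the repeated pairs $(\rho,\mu)$ and $(-k\beta+\rho+km,\mu)$ are truly identical on numerator and denominator for every summation index $n$, so that the $k$-gamma ratios are $1$ and the reduction is a genuine equality of series (rather than a mere simplification modulo poles). Once that is checked, the corollary follows as a direct specialization; no new analytic work beyond Theorem \ref{t5.2} is required.
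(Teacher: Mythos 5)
Your derivation is correct and is exactly the specialization the paper intends: the paper states this corollary without proof as an immediate consequence of Theorem \ref{t5.2} (equivalently, of Corollary \ref{c5.4} with $\beta=0$), obtained by setting the MSM parameters to $(0,\alpha,\beta,\alpha+\gamma,\alpha)$, cancelling the repeated pairs $(\rho,\mu)$ and $(-k\beta+\rho+km,\mu)$, and using the freeness of $\beta$ to discard the $\operatorname{Re}(\beta)$ constraint. Your parameter bookkeeping, the reduction ${}_{p+3}\Psi^k_{q+3}\to{}_{p+1}\Psi^k_{q+1}$, and the resulting hypotheses all check out.
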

\section{Conclusion}
The Marichev-Saigo-Maeda fractional operators transform the generalized $K$-Wright function into a higher ordered generalized $K$-Wright function. The results obtained by Kilbas \cite{Kilbas2004113} for generalized Wright function, and by  Gehlot and Prajapati \cite{Gehlot283} for generalized $K$-Wright function are particular cases of the results derived in this paper. In view of (\ref{1.19}), (\ref{1.20}) and the fact that Riemann-Liouville, Weyl and Erd\'elyi-Kober fractional calculus operators are special cases of Saigo's operators, the effect of all these fractional operators on the generalized $K$-Wright function and its special cases can easily be obtained from our results.
\printbibliography
\end{document}